\newtheorem{thm}{Theorem}[section]
\newtheorem{cor}[thm]{Corollary}
\newtheorem{lem}[thm]{Lemma}
\newtheorem{prop}[thm]{Proposition}
\newtheorem{ex}[thm]{Example}
\newtheorem{prob}[thm]{Problem}
\newtheorem{defn}[thm]{Definition}
\numberwithin{equation}{section}
\newcommand{\N}{\mathbb{N}}
\newcommand{\R}{\mathbb{R}}
\begin{document}

\title{Lusin-type properties of convex functions and convex bodies}

\author{Daniel Azagra}
\address{Departamento de An{\'a}lisis Matem{\'a}tico y Matem\'atica Aplicada,
Facultad Ciencias Matem{\'a}ticas, Universidad Complutense, 28040, Madrid, Spain.  
}
\email{azagra@mat.ucm.es}

\author{Piotr Haj\l asz}
\address{Department of Mathematics,
University of Pittsburgh,
301 Thackeray Hall,
Pittsburgh, PA 15260, USA.}
\email{hajlasz@pitt.edu}

\date{November 11, 2020}

\keywords{convex function, convex body, convex hypersurface, Lusin property, Whitney extension theorems}

\subjclass[2020]{26B25, 28A75, 41A30, 52A20, 52A27, 53C45}

\begin{abstract}
We prove that if $f:\R^n\to\R$ is convex and $A\subset\R^n$ has finite measure, then for any $\varepsilon>0$ there is a convex function $g:\R^n\to\R$ of class $C^{1,1}$ such that $\mathcal{L}^n(\{x\in A:\,  f(x)\neq g(x)\})<\varepsilon$.
As an application we deduce that if $W\subset\R^n$ is a compact convex body then, for every $\varepsilon>0$, there exists a convex body $W_{\varepsilon}$ of class $C^{1,1}$ such that
$\mathcal{H}^{n-1}\left(\partial W\setminus \partial W_{\varepsilon}\right)< \varepsilon$.
We also show that if $f:\R^n\to\R$ is a convex function and $f$ is not  of class $C^{1,1}_{\rm loc}$, then for any $\varepsilon>0$ there is a convex function $g:\R^n\to\R$ of class $C^{1,1}_{\rm loc}$ such that
$\mathcal{L}^n(\{x\in \R^n:\,  f(x)\neq g(x)\})<\varepsilon$ if and only if $f$ is essentially coercive, meaning that  $\lim_{|x|\to\infty}f(x)-\ell(x)=\infty$ for some linear function $\ell$. A consequence of this result is that, if $S$ is the boundary of some convex set with nonempty interior (not necessarily bounded) in $\R^n$ and $S$ does not contain any line, then for every $\varepsilon>0$ there exists a convex hypersurface $S_{\varepsilon}$ of class $C^{1,1}_{\textrm{loc}}$ such that $\mathcal{H}^{n-1}(S\setminus S_{\varepsilon})<\varepsilon$.
\end{abstract}

\maketitle

\section{Introduction and main results}

Let $\mathcal{A}$ and $\mathcal{C}$ be two classes of real valued functions defined on $\R^n$ (or on an open subset of $\R^n$).  If for a given $f\in\mathcal{A}$ and every $\varepsilon>0$  we can find a function $g\in\mathcal{C}$ such that 
\begin{equation}\label{Lusin}
\mathcal{L}^n\left(\{x : f(x)\neq g(x)\}\right)<\varepsilon,
\end{equation} 
we say that $f$ {\em has the Lusin property of class $\mathcal{C}$.} Here, and in what follows $\mathcal{L}^{n}$ denotes the Lebesgue measure in $\R^n$. If every function $f\in\mathcal{A}$ satisfies this property, we also say that $\mathcal{A}$ has the Lusin property of class $\mathcal{C}$.

This terminology comes from the well known theorem of Lusin which asserts 
that for every Lebesgue measurable function $f:\R^n\to\R$ and every $\varepsilon>0$ there exists a continuous function $g:\R^n\to\R$ such that $\mathcal{L}^n\left(\{x : f(x)\neq g(x)\}\right)<\varepsilon$. That is, measurable functions have the Lusin property of class $C(\R^n)$. 

Several authors have shown that one can take $g$ of class $C^k$ if $f$ has some weaker regularity properties of order $k$. 
For instance, Federer \cite[p. 442]{Federer} showed that almost everywhere differentiable functions (and in particular locally Lipschitz functions) have the Lusin property of class $C^1$. Whitney \cite{Whitney2} improved this result by showing that a function $f:\R^n\to\R$ has approximate partial derivatives of first order a.e. if and only if $f$ has the Lusin property of class $C^1$; see also \cite{Alberti,MP} for related results.

In \cite[Theorem 13]{CalderonZygmund} Calder\'on and Zygmund proved analogous results for
$\mathcal{A}=W^{k,p}(\R^n)$ (the class of Sobolev functions) and $\mathcal{C}=C^k(\R^n)$. Other authors, including Liu \cite{Liu1977}, Bagby, Michael and Ziemer \cite{BagbyZiemer, MichaelZiemer, Ziemer}, Bojarski, Haj\l asz and Strzelecki \cite{BojarskiHajlasz, BojarskiHajlaszStrzelecki}, and Bourgain, Korobkov and Kristensen \cite{BourKoKris2} improved Calder\'on and Zygmund's result in several directions, by obtaining additional estimates for $f-g$ in the Sobolev norms, as well as the Bessel capacities or the Hausdorff contents of the exceptional sets where $f\neq g$. In \cite{BourKoKris2} some Lusin properties of the class $BV_{k}(\R^n)$ (of integrable functions whose distributional derivatives of order up to $k$ are Radon measures) are also established. On the other hand, generalizing Whitney's result \cite{Whitney2} to higher orders of differentiability, Isakov \cite{Isakov} and Liu and Tai \cite{LiuTai} independently established that a function $f:\R^n\to\R$ has the Lusin property of class $C^{k}$ if and only if $f$ is approximately differentiable of order $k$ almost everywhere (and if and only if $f$ has an approximate $(k-1)$-Taylor polynomial at almost every point). See also \cite{Francos,Whitney2} for related results.

The Whitney extension technique \cite{Whitney}, or other related techniques, such as the Whitney smoothing \cite{BojarskiHajlaszStrzelecki}, play a key role in the proofs of all of these results. 

For the special class of {\em convex} functions $f:\R^n\to\R$, Alberti, Imomkulov \cite{Alberti2, Imomkulov} and Evans and Gangbo \cite[Proposition~A.1]{EvansGangbo}
showed that every convex function has the Lusin property of class $C^2$; however, given a convex function $f$, the function $g\in C^2$ satisfying \eqref{Lusin} that they obtained is not necessarily convex. 
Indeed, their arguments were based on the Whitney extension theorem and 
Whitney's construction does not preserve convexity.
Thus it is natural to consider the following problems.

Denote by $C^{1,1}(\R^n)$, $C^{1,1}_\textrm{loc}(\R^n)$ and $C_{\textrm{conv}}(\R^n)$ respectively the class of real valued $C^1$ functions with Lipschitz continuous gradient, the class of $C^1$ functions with locally Lipschitz continuous gradient and the class of convex functions, and define
\begin{eqnarray*}
& C^{1,1}_{\textrm{conv}}(\R^n)=C^{1,1}(\R^n)\cap C_{\textrm{conv}}(\R^n); \\
& C^{1,1 \, \textrm{loc}}_{\textrm{conv}}(\R^n)=C^{1,1}_{\textrm{loc}}(\R^n)\cap C_{\textrm{conv}}(\R^n); \\
& C^2_{\textrm{conv}}(\R^n)=C^{2}(\R^n)\cap C_{\textrm{conv}}(\R^n).
\end{eqnarray*}

\begin{prob}\label{problem 1}
{\em Given $\mathcal{C}\in\{C^{1,1}_{\textrm{conv}}(\R^n), \, C^{1,1 \, \textrm{loc}}_{\textrm{conv}}(\R^n), \,
C^2_{\textrm{conv}}(\R^n)\}$, does $C_{\textrm{conv}}(\R^n)$ have the Lusin property of class $\mathcal{C}$?}
\end{prob}
Example~\ref{C11 does not suit unbounded domains} and Proposition~\ref{if f is not coercive nothing can be done globally} show that the answer to this question is in the negative. Thus we ask:

\begin{prob}\label{problem 2}
{\em For every such $\mathcal{C}$, can we characterize the subclass of $C_{\textrm{conv}}(\R^n)$ which has the Lusin property of class $\mathcal{C}$?}
\end{prob}

In Theorem~\ref{main theorem for loc C11 convex} we provide a complete answer to this question when $\mathcal{C}= C^{1,1 \, \textrm{loc}}_{\textrm{conv}}(\R^n)$.

\begin{prob}\label{problem 3}
{\em  What happens if we replace $\R^n$ with an open bounded convex subset $\Omega$ of $\R^n$?}
\end{prob}

In Corollary~\ref{main corollary for C11 convex} we show that the answer is in the positive when $\mathcal{C}=C^{1,1}_{\textrm{conv}}(\Omega)$ (and hence when  
$\mathcal{C}=C^{1,1 \, \textrm{loc}}_{\textrm{conv}}(\Omega)$). See also Theorem~\ref{main theorem for C11 convex}.

Our proofs are based on some results and techniques concerning $C^{1,1}_{\textrm{conv}}(\R^n)$ and $C^{1,1 \, \textrm{loc}}_{\textrm{conv}}(\R^n)$
extensions of $1$-jets recently obtained in \cite{Azagra2019,AzagraLeGruyerMudarra,AzagraMudarra1,  AzagraMudarra2}. Problems \ref{problem 2} and \ref{problem 3} remain open for $\mathcal{C}=C^{2}_{\textrm{conv}}(\R^n)$, and they look rather hard in the absence of a characterization of the $2$-jets (defined on an arbitrary subset of $\R^n$) which admit $C^{2}_{\textrm{conv}}(\R^n)$ extensions.

As an application of our results, we will show that all boundaries of compact convex bodies in $\R^n$ have the Lusin property of class $C^{1,1}_{\textrm{conv}}$, meaning that they are equal, up to subsets of arbitrarily small measures, to boundaries of convex bodies of class $C^{1,1}$; see Corollary \ref{corollary for convex bodies} below.

Our first main result is as follows.
\begin{thm}
\label{main theorem for C11 convex}
Let $f:\R^n\to\R$ be a convex function. For every measurable subset $A$ of finite Lebesgue measure in $\R^n$, and for every $\varepsilon>0$, there exists a convex function $g:\R^n\to\R$ of class $C^{1,1}$ such that
\begin{equation}
\label{eq1}
\mathcal{L}^n\left(\{x\in A:\, f(x)\neq g(x)\}\right)<\varepsilon.
\end{equation}
\end{thm}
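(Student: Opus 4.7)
The idea is to reduce the theorem to the $C^{1,1}_{\textrm{conv}}$ Whitney-type extension theorem for $1$-jets established in \cite{Azagra2019, AzagraMudarra1, AzagraMudarra2}. First I would invoke Alexandrov's theorem: the convex function $f$ is twice differentiable almost everywhere, so at a.e.\ $x\in A$,
\[
f(x+h)=f(x)+\nabla f(x)\cdot h+\tfrac{1}{2}h^{T}D^2 f(x)h+o(|h|^2)\quad\text{as }h\to 0.
\]
Since $A$ has finite measure, an Egorov-type argument together with discarding sets of small measure produces a compact set $K\subset A$ with $\mathcal{L}^n(A\setminus K)<\varepsilon$ such that: (i) $\|D^2 f\|$ is uniformly bounded on $K$; (ii) the $o(|h|^2)$ remainder above is uniform in $x\in K$, in the sense that there exists $\omega:(0,\infty)\to[0,\infty)$ with $\omega(t)\to 0^{+}$ such that the remainder is $\le\omega(|h|)|h|^2$ for every $x\in K$; and (iii) $f$ and $\nabla f$ are bounded on $K$.

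The next step is to upgrade (i)--(iii) to the hypotheses required by the convex $C^{1,1}$ jet-extension theorem applied to $(f|_K,\nabla f|_K)$, namely the Lipschitz estimate
\[
|\nabla f(x)-\nabla f(y)|\le M|x-y|,\qquad x,y\in K,
\]
together with the two-sided inequality
\[
0\le f(y)-f(x)-\nabla f(x)\cdot(y-x)\le M|y-x|^2,\qquad x,y\in K.
\]
The lower inequality follows immediately from convexity (applied at the differentiability point $x$). For $|x-y|\le r_0$, with $r_0>0$ suitably small, both the upper quadratic bound and the Lipschitz bound follow at once from (i) and (ii) via the uniform Taylor expansion. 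For $|x-y|\ge r_0$ they follow by dividing the bounded quantities $|\nabla f(x)-\nabla f(y)|$ and $|f(y)-f(x)-\nabla f(x)\cdot(y-x)|$ by $|x-y|$ or $|x-y|^2$, using (iii) and the boundedness of $K$.

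With these uniform estimates in hand, the $C^{1,1}_{\textrm{conv}}$ jet-extension theorem of \cite{AzagraMudarra1, AzagraMudarra2} supplies a convex function $g\in C^{1,1}(\R^n)$ with $g(x)=f(x)$ and $\nabla g(x)=\nabla f(x)$ for every $x\in K$. Then $\{x\in A:f(x)\ne g(x)\}\subset A\setminus K$ has measure less than $\varepsilon$, which is precisely \eqref{eq1}. I expect the main obstacle to be the passage from pointwise twice differentiability to uniform quadratic control on a large compact subset of $A$, and the careful verification that the resulting $1$-jet meets exactly the hypotheses of the convex extension theorem; once this is done, the specifically convex-analytic part of the argument reduces to a direct invocation of the cited extension result.
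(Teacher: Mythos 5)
Your overall strategy (Alexandrov $+$ Egorov-type exhaustion $+$ convex $C^{1,1}$ jet-extension theorem) is the same as the paper's, and the reduction to a compact set $K$ with uniform quadratic Taylor control is correct and essentially matches the paper's exhaustion by the sets $E_j$. The gap is in the step where you identify ``the hypotheses required by the convex $C^{1,1}$ jet-extension theorem'' as the two estimates $|\nabla f(x)-\nabla f(y)|\le M|x-y|$ and $0\le f(y)-f(x)-\langle\nabla f(x),y-x\rangle\le M|y-x|^2$ on $K\times K$. Those are \emph{not} the hypotheses of the theorem you cite. The actual hypothesis (condition~\eqref{CW11} of Theorem~\ref{C11 Whitney thm for convex}) is the co-coercivity inequality
$$
f(x)-f(y)-\langle \nabla f(y), x-y\rangle \;\ge\; \frac{1}{2M}\,|\nabla f(x)-\nabla f(y)|^2
\qquad\text{for all } x,y\in K,
$$
which is a \emph{lower} bound on the Bregman divergence, not an upper one, and it is genuinely stronger than your pair of conditions. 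Your two conditions are necessary for a $C^{1,1}$ convex extension but they do not imply~\eqref{CW11}: on a convex function in one variable whose derivative increases from $\nabla f(y)$ to $\nabla f(x)$ almost entirely on a tiny window near $x$, both your estimates hold with a fixed $M$, while the left-hand side of~\eqref{CW11} can be made arbitrarily small compared to $|\nabla f(x)-\nabla f(y)|^2$, so no uniform $M'$ works. (One also has to be careful in dimensions $n\ge 2$: even the Lipschitz estimate for $\nabla f$ on $K$ does not ``follow at once'' from the uniform Taylor bound, since the monotonicity inequality controls only $\langle\nabla f(x)-\nabla f(y),x-y\rangle$, not the full norm $|\nabla f(x)-\nabla f(y)|$, when the points are confined to a set of small measure.)

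What makes the paper's argument work, and what your proposal is missing, is the equivalent reformulation of~\eqref{CW11} given in Lemma~\ref{L1}: \eqref{CW11} is equivalent to the inequality
$$
f(z)+\langle\nabla f(z),x-z\rangle \;\le\; f(y)+\langle\nabla f(y),x-y\rangle +\tfrac{M}{2}|x-y|^2
$$
for all $y,z\in K$ \emph{and all $x\in\R^n$}. This decouples into (i) the tangent-plane lower bound $f(z)+\langle\nabla f(z),x-z\rangle\le f(x)$, which is simply convexity, and (ii) the upper paraboloid bound $f(x)\le f(y)+\langle\nabla f(y),x-y\rangle+\tfrac{M}{2}|x-y|^2$ for $y\in K$ and \emph{arbitrary} $x\in\R^n$. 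Verifying (ii) uniformly for $|x-y|$ large is where the global Lipschitz modification $\tilde f=\inf_{z\in W}\{f(z)+K|x-z|\}$ of Lemma~\ref{L2} enters: it replaces $f$ by a globally Lipschitz convex function agreeing with $f$ near $K$, so that for $|x-y|>1/N$ the linear Lipschitz bound dominates the quadratic one. Your proposal keeps both arguments $x,y$ inside $K$, never introduces the global Lipschitz replacement, and never passes through the reformulation of Lemma~\ref{L1}; as it stands, it invokes the extension theorem under hypotheses it does not actually require and has not established.
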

Note that if $f=g$ in a measurable set $E$, then 
it is easy to show that
$\nabla f=\nabla g$ a.e. in $E$, so condition \eqref{eq1} is equivalent to a seemingly stronger one:
$$
\mathcal{L}^n\left(\{x\in A:\, f(x)\neq g(x) \, \textrm{ or } \nabla f(x)\neq\nabla g(x)\}\right)<\varepsilon
$$
which says that outside a set of measure less than $\varepsilon$, we have $f=g$ and $\nabla f=\nabla g$.

As a corollary we obtain a new proof of a result mentioned above.
\begin{cor}
\label{C1}
If $f:\R^n\to\R$ is convex, then for any $\varepsilon>0$ there is $g\in C^2(\R^n)$ such that $\mathcal{L}^n(\{x:\, f(x)\neq g(x)\})<\varepsilon$.
\end{cor}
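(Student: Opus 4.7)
The plan is to apply Theorem~\ref{main theorem for C11 convex} on a locally finite cover of $\R^n$ by bounded sets, to pass from $C^{1,1}$ to $C^2$ on each piece using a standard Lusin-type result, and to glue the pieces together with a $C^\infty$ partition of unity. A single application of Theorem~\ref{main theorem for C11 convex} does not suffice, since that theorem only controls $\{f\neq g\}$ inside a set of finite measure, whereas Corollary~\ref{C1} asks for $\{f\neq g\}$ to have measure less than $\varepsilon$ in all of $\R^n$; note however that here $g$ is not required to be convex, which is what makes a partition-of-unity gluing available.

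I would first fix a locally finite open cover $\{U_k\}_{k\geq 1}$ of $\R^n$ by bounded sets, together with a $C^\infty$ partition of unity $\{\phi_k\}$ subordinate to $\{U_k\}$. For each $k$, Theorem~\ref{main theorem for C11 convex} applied with $A=U_k$ and $\varepsilon_k=\varepsilon/2^{k+2}$ produces a convex function $h_k\in C^{1,1}(\R^n)$ with $\mathcal{L}^n(\{x\in U_k:f(x)\neq h_k(x)\})<\varepsilon_k$. Because $h_k\in C^{1,1}(\R^n)$, the gradient $\nabla h_k$ is Lipschitz on $\R^n$, hence differentiable almost everywhere by Rademacher's theorem, so $h_k$ admits a pointwise second-order Taylor polynomial almost everywhere. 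The theorem of Isakov~\cite{Isakov} and Liu--Tai~\cite{LiuTai} (quoted in the introduction) then supplies $g_k\in C^2(\R^n)$ with $\mathcal{L}^n(\{h_k\neq g_k\})<\varepsilon_k$.

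Finally I would set $g=\sum_k\phi_k g_k$. Local finiteness of the sum, together with $g_k\in C^2(\R^n)$ and $\phi_k\in C^\infty_c(\R^n)$, gives $g\in C^2(\R^n)$. Writing $E_k=\{x\in U_k:g_k(x)\neq f(x)\}$, subadditivity of $\mathcal{L}^n$ combined with the two estimates above gives $\mathcal{L}^n(E_k)<2\varepsilon_k$. For any $x\notin\bigcup_k E_k$ and any $k$ with $\phi_k(x)>0$, the inclusion $\mathrm{supp}(\phi_k)\subset U_k$ and $x\notin E_k$ force $g_k(x)=f(x)$; summing against the partition of unity then yields $g(x)=f(x)\sum_k\phi_k(x)=f(x)$. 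Consequently $\{f\neq g\}\subset\bigcup_k E_k$ has Lebesgue measure at most $\sum_k 2\varepsilon_k=\varepsilon/2<\varepsilon$.

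The only step requiring more than bookkeeping is the $C^{1,1}$-to-$C^2$ passage, which is the cited Isakov--Liu--Tai theorem; if one prefers a self-contained argument, Whitney's $C^2$ extension theorem can be applied to the $2$-jet $(h_k,\nabla h_k,D^2 h_k)$ on a closed set $F\subset\R^n$ of small complement on which $D^2 h_k$ is continuous, such $F$ being produced by Lusin's theorem applied to the bounded measurable Hessian and by uniform convergence of second-order Taylor remainders. All the genuinely convex analysis is concentrated in Theorem~\ref{main theorem for C11 convex}, and the remaining arguments are classical.
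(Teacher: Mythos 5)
Your proof is correct and uses essentially the same ingredients and overall strategy as the paper's: an exhaustion of $\R^n$ by bounded sets, Theorem~\ref{main theorem for C11 convex} on each piece, a smooth partition of unity to glue (which is legitimate precisely because $g$ need not be convex), and a classical $C^{1,1}$-to-$C^2$ Lusin-type result. The only difference is a harmless reordering: the paper first glues the convex $C^{1,1}$ pieces into a single $C^{1,1}_{\rm loc}$ function and then invokes Whitney's theorem \cite[Theorem~4]{Whitney2} once, whereas you upgrade each piece to $C^2$ via Isakov/Liu--Tai before gluing.
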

 We do not claim however, that $g$ is convex.

\begin{cor}\label{main corollary for C11 convex}
Let $\Omega$ be a bounded, open and convex subset of $\R^n$. Then for every convex function $f:\Omega\to\R$ and every $\varepsilon>0$ there exists a convex function $g:\R^n\to\R$ of class $C^{1,1}$ such that  
$\mathcal{L}^n\left(\{x\in \Omega:\, f(x)\neq g(x) \}\right)<\varepsilon$.
\end{cor}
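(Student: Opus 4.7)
The plan is to reduce Corollary~\ref{main corollary for C11 convex} to Theorem~\ref{main theorem for C11 convex} by producing a globally defined convex function $\tilde f:\R^n\to\R$ that agrees with $f$ on a large inner approximation of $\Omega$. The potential obstruction is that $f$ itself may not admit a convex extension to $\R^n$ (its subgradients can blow up near $\partial\Omega$), so we will first have to throw away a thin shell along $\partial\Omega$ of small measure.

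First I would choose an open convex set $\Omega'\subset\Omega$ whose closure is compact and contained in $\Omega$, and whose complement inside $\Omega$ is small. A convenient choice is
\[
\Omega':=\{x\in\Omega:\operatorname{dist}(x,\partial\Omega)>\delta\},
\]
which is convex because $\Omega$ is convex, and satisfies $\mathcal{L}^n(\Omega\setminus\Omega')<\varepsilon/2$ provided $\delta>0$ is small enough.

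Next I would construct the extension. Since $f$ is convex on the open set $\Omega\supset\overline{\Omega'}$, it is locally Lipschitz on $\Omega$, hence $L$-Lipschitz on the compact set $\overline{\Omega'}$ for some $L<\infty$. For each $y\in\overline{\Omega'}$ pick $\xi_y\in\partial f(y)$; the local Lipschitz bound forces $|\xi_y|\le L$. Define
\[
\tilde f(x):=\sup_{y\in\overline{\Omega'}}\bigl(f(y)+\xi_y\cdot(x-y)\bigr),\qquad x\in\R^n.
\]
This is a supremum of affine functions with slopes bounded by $L$ and base points in a compact set, so $\tilde f$ is finite-valued, convex, and $L$-Lipschitz on $\R^n$. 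Moreover $\tilde f=f$ on $\overline{\Omega'}$: the inequality $\tilde f(x_0)\ge f(x_0)$ is obtained by taking $y=x_0$, while for any $y\in\overline{\Omega'}$ the subgradient inequality $f(x_0)\ge f(y)+\xi_y\cdot(x_0-y)$ (valid because both $x_0$ and $y$ lie in the convex set $\Omega$ where $f$ is convex) gives the reverse inequality.

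Finally I would apply Theorem~\ref{main theorem for C11 convex} to the global convex function $\tilde f$, with the measurable set $A=\Omega$ (which has finite Lebesgue measure since $\Omega$ is bounded) and the tolerance $\varepsilon/2$. This produces a convex $g\in C^{1,1}(\R^n)$ with
\[
\mathcal{L}^n\bigl(\{x\in\Omega:\tilde f(x)\neq g(x)\}\bigr)<\varepsilon/2.
\]
Since $f=\tilde f$ on $\Omega'$, we have the inclusion
\[
\{x\in\Omega:f(x)\neq g(x)\}\subseteq (\Omega\setminus\Omega')\cup\{x\in\Omega':\tilde f(x)\neq g(x)\},
\]
whose Lebesgue measure is bounded by $\varepsilon/2+\varepsilon/2=\varepsilon$, finishing the proof. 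The only delicate point is the construction of $\tilde f$, and the trick is precisely that after passing from $\Omega$ to the strictly smaller $\Omega'$ we regain the boundedness of subgradients needed to form a convex, Lipschitz, global extension.
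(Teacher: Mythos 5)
Your proposal is correct and follows the same overall strategy as the paper: shrink $\Omega$ to a compact convex subset whose complement in $\Omega$ has small measure, extend $f$ from that subset to a globally defined convex Lipschitz function on all of $\R^n$, and then invoke Theorem~\ref{main theorem for C11 convex}. The only genuine difference is the extension step: you use the supremum of supporting affine functions
$\tilde f(x)=\sup_{y\in\overline{\Omega'}}\bigl(f(y)+\xi_y\cdot(x-y)\bigr)$, whereas the paper applies the Lipschitz inf-convolution of Lemma~\ref{L2}, $\tilde f(x)=\inf_{z\in W_\varepsilon}\{f(z)+K|x-z|\}$. Both are standard convex extension techniques and both work here; your version has the advantage that convexity of $\tilde f$ is immediate (supremum of affine functions) while the paper's version reuses a lemma it also needs elsewhere. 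One small imprecision worth flagging: if $L$ is defined as the Lipschitz constant of $f$ on $\overline{\Omega'}$, the bound $|\xi_y|\le L$ is not automatic for $y$ on the boundary of $\Omega'$, since $\xi_y$ is a subgradient of $f$ relative to the larger domain $\Omega$. You should instead take $L$ to be the Lipschitz constant of $f$ on a slightly larger compact subset of $\Omega$ containing $\overline{\Omega'}$ in its interior (which exists since $\overline{\Omega'}$ is compact and $\Omega$ is open), and then the bound $|\xi_y|\le L$ follows by the usual argument of taking $z=y+t\xi_y/|\xi_y|$ for small $t>0$. This does not affect the correctness of the overall argument.
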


Recall that a subset $W$ of $\R^n$ is a {\em compact convex body} if $W$ is compact and convex, with nonempty interior. We will say that a compact convex body $W$ is of class $C^{1,1}$ provided that its boundary $\partial W$ is a $C^1$ hypersurface of $\R^n$ such that the outer unit normal $N:\partial W\to\mathbb{S}^{n-1}$ is a Lipschitz mapping. If $W$ is a compact convex body, this is equivalent to saying that the Minkowski functional of $W$ is of class $C^{1,1}$ on $\R^n\setminus B(0, \varepsilon)$ for some $\varepsilon>0$, or that $W$ can be locally parameterized as a graph $(x_1, ..., x_{n-1}, g(x_{1}, ..., x_{n-1}))$ (coordinates taken with respect to an appropriate permutation of the canonical basis of $\R^n$), where $g$ is of class $C^{1,1}$. 

A consequence of Theorem \ref{main theorem for C11 convex} is that the boundary of every compact convex body in $\R^n$ is of class $C^{1,1}$ up to a subset of arbitrarily small $(n-1)$-dimensional Hausdorff measure.

\begin{cor}
\label{corollary for convex bodies}
Let $W$ be a compact convex body in $\R^n$. Then for every $\varepsilon>0$ there exists a compact convex body $W_{\varepsilon}$ of class $C^{1,1}$ such that
$\mathcal{H}^{n-1}\left(\partial W\setminus \partial W_{\varepsilon}\right)< \varepsilon$.
\end{cor}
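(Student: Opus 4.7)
The strategy is to apply Theorem~\ref{main theorem for C11 convex} to the Minkowski functional of $W$ and interpret a sublevel set of the resulting smooth convex approximant as $W_\varepsilon$.

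After translating, assume $0$ lies in the interior of $W$ and let $\mu(x) = \inf\{t > 0 : x \in tW\}$ be its Minkowski functional. Then $\mu:\R^n \to \R$ is convex, positively $1$-homogeneous and globally Lipschitz, with $\partial W = \{\mu = 1\}$; moreover, Euler's identity $\langle \nabla\mu(x), x\rangle = \mu(x)$ yields the lower bound $|\nabla\mu(x)| \geq 1/\mathrm{diam}(W)$ at every differentiability point on $\partial W$.

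Fix a small $\delta > 0$, set $A = \{1-\delta < \mu < 1+\delta\}$ (a bounded tubular neighborhood of $\partial W$), and apply Theorem~\ref{main theorem for C11 convex} to $\mu$ on $A$ with tolerance $\varepsilon' \ll \delta\varepsilon$ to obtain a convex $g \in C^{1,1}(\R^n)$ with $\mathcal{L}^n(A \cap \{g \neq \mu\}) < \varepsilon'$. Replacing $g$ by $g_\eta(x) := g(x) + \eta|x|^2$ for small $\eta > 0$ preserves convexity and $C^{1,1}$-regularity while enforcing coercivity; combined with the observation (noted in the paper just after Theorem~\ref{main theorem for C11 convex}) that $\nabla g = \nabla \mu$ almost everywhere on the coincidence set $\{g = \mu\}$, this ensures $\nabla g_\eta \neq 0$ along the level sets of $g_\eta$ near height $1$. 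Setting $W_\varepsilon := \{g_\eta \leq c\}$ for an appropriate $c$ close to $1$ then produces a compact convex body of class $C^{1,1}$ with $0$ in its interior.

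Since $g(x) = \mu(x)$ forces $g(x) = 1 \iff \mu(x) = 1$, we have $\partial W \setminus \partial W_\varepsilon \subseteq \{\mu = 1\} \cap \{g \neq \mu\}$, so the corollary reduces to bounding the $\mathcal{H}^{n-1}$-measure of the right-hand side. The coarea formula for the Lipschitz function $\mu$ provides the averaged estimate
\[
\int_{1-\delta}^{1+\delta} \mathcal{H}^{n-1}\bigl(\{\mu = t\} \cap \{g \neq \mu\}\bigr)\, dt \; = \; \int_{A \cap \{g \neq \mu\}} |\nabla \mu|\, d\mathcal{L}^n \; \leq \; \mathrm{Lip}(\mu)\cdot \varepsilon'.
\]
\textbf{The main obstacle} is upgrading this averaged bound over the slices $t \in (1-\delta, 1+\delta)$ into a pointwise bound at the specific level $t = 1$; I expect this to require either inspecting the Whitney-type construction behind Theorem~\ref{main theorem for C11 convex} to exploit extra density properties of the coincidence set $\{g = \mu\}$ on $\partial W$, or a refined slicing argument exploiting Alexandrov-type regularity of the convex hypersurface $\partial W$.
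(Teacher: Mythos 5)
You correctly reduce the problem to the coarea/averaging estimate, but the obstacle you flag---upgrading the averaged bound
\[
\int_{1-\delta}^{1+\delta}\mathcal{H}^{n-1}\bigl(\{\mu=t\}\cap\{g\neq\mu\}\bigr)\,dt<\varepsilon'
\]
to a pointwise bound at the specific level $t=1$---is precisely the step the paper avoids rather than performs, and without that idea your argument does not close. The missing observation is that $\mu$ is positively $1$-homogeneous, so $\mu^{-1}(t)=t\,\partial W$ for every $t>0$: \emph{every} level set of $\mu$ is a dilated copy of $\partial W$. You therefore have no need for the good slice to be $t=1$. The paper applies the approximation on $2W$, works on the annulus $\{1<\mu\leq 2\}$, and by Chebyshev finds $t_0\in(1,2)$ with $\mathcal{H}^{n-1}(\{\mu=t_0\}\cap\{g\neq\mu\})<\varepsilon$ which is also a regular value of $g$ (for a convex $C^1$ function the critical points are exactly the global minimizers, so $\min g$ is the only possible critical value and all but one level is regular; the paper calls this Sard's theorem). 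It then sets $W_\varepsilon:=\tfrac1{t_0}\,g^{-1}(-\infty,t_0]$, a convex body with $C^{1,1}$ boundary $\tfrac1{t_0}g^{-1}(t_0)$. Using $\mu^{-1}(t_0)=t_0\,\partial W$ one checks $t_0(\partial W\setminus\partial W_\varepsilon)=\{\mu=t_0\}\cap\{g\neq\mu\}$, and since $t_0>1$ the rescaling factor $t_0^{n-1}\geq 1$ only helps, giving $\mathcal{H}^{n-1}(\partial W\setminus\partial W_\varepsilon)<\varepsilon$. So the gap is genuine: the missing idea is to rescale the sublevel set to a well-chosen slice rather than to force the slice $t=1$ to be good.

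A secondary point: the perturbation $g\mapsto g_\eta=g+\eta|x|^2$ is both unnecessary and counterproductive. Unnecessary, because as noted above a convex $C^1$ function has at most one critical value, so regular levels are abundant. Counterproductive, because it destroys the exact coincidence $g=\mu$ that drives the estimate: at a point $x\in\partial W$ with $g(x)=\mu(x)=1$ you get $g_\eta(x)=1+\eta|x|^2$, which varies with $x$ and so cannot equal a single level $c$ for all such $x$; consequently the inclusion $\partial W\setminus\partial W_\varepsilon\subseteq\{\mu=1\}\cap\{g\neq\mu\}$ that your estimate relies on fails for $W_\varepsilon=\{g_\eta\leq c\}$. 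Work with $g$ itself and a sublevel set at a good value $t_0$.
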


The following examples show that the assumption $\mathcal{L}^{n}(A)<\infty$ in Theorem \ref{main theorem for C11 convex} cannot be dispensed with (unless other hypotheses on the global behaviour and the growth of $f$ are put in their place).

\begin{ex}
\label{C11 does not suit unbounded domains}
Let $f:\R\to\R$ be defined by $f(x)=x^4$. Then there is no function $g\in C^{1,1}(\R)$ such that $\mathcal{L}^{1}(\{x\in\R : f(x)\neq g(x)\})<\infty$. 
\end{ex}
Indeed, the second derivative of $f$ is bounded by a constant only on a set of finite measure.
\begin{ex}
Let $f:\R^2\to\R$ be defined by $f(x,y)=|x|$. Then $f$ is the only convex function $g:\R^2\to\R$ such that $\mathcal{L}^n\left(\{x\in\R^n : f(x)\neq g(x)\}\right)<\infty$.
\end{ex}

More generally we have the following.

\begin{prop}
\label{if f is not coercive nothing can be done globally}
Let $P:\R^n\to X$ be the orthogonal projection onto a linear subspace $X$ of $\R^n$ of dimension $k$, with $1\leq k\leq n-1$, let $c:X\to\R$ be a convex function, and define $f(x)=c(P(x))$. Then $f$ is the only convex function $g:\R^n\to\R$ such that $\mathcal{L}^n\left(\{x\in\R^n : f(x)\neq g(x)\}\right)<\infty$.
\end{prop}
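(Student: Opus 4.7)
The plan is to exploit the fact that $f$ is invariant under translations in the orthogonal complement $Y:=X^{\perp}$: since $P(v)=0$ for every $v\in Y$, we have $f(x+tv)=c(P(x))$ for all $t\in\R$ and $v\in Y$. Because $\dim Y=n-k\geq 1$, we may fix a unit vector $v\in Y$ and slice $\R^n$ by the lines $\ell_w:=\{w+tv:t\in\R\}$ with $w\in v^{\perp}$. Suppose $g:\R^n\to\R$ is convex with $\mathcal{L}^n(\{f\neq g\})<\infty$. By Fubini's theorem,
\[
\int_{v^{\perp}}\mathcal{L}^{1}\bigl(\{t\in\R:g(w+tv)\neq f(w+tv)\}\bigr)\,d\mathcal{L}^{n-1}(w)=\mathcal{L}^n(\{f\neq g\})<\infty,
\]
so for $\mathcal{L}^{n-1}$-almost every $w\in v^{\perp}$ the exceptional set on $\ell_w$ has finite one-dimensional measure.

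For such a $w$, set $c_w:=c(P(w))$ and consider the convex function of one variable $h_w(t):=g(w+tv)$. On the co-finite set where $g=f$ along $\ell_w$ we have $h_w(t)=c_w$. First I would show that $h_w\leq c_w$ everywhere: since the set $\{t:h_w(t)=c_w\}$ has complement of finite measure, it contains sequences $t_m^{+}\to+\infty$ and $t_m^{-}\to-\infty$, and any $t\in\R$ can be written as a convex combination of $t_m^{+}$ and $t_m^{-}$ for $m$ large, so convexity yields $h_w(t)\leq c_w$.

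The second (and main) step is the auxiliary lemma: a convex function $h:\R\to\R$ satisfying $h\leq c_w$ everywhere and $h(t_0)=c_w$ for some $t_0$ must be identically $c_w$. The argument is a standard slope comparison: for $a<b<t$, convexity gives
\[
h(t)\geq\frac{(t-a)\,h(b)-(t-b)\,h(a)}{b-a},
\]
so if $h(b)>h(a)$ then $h(t)\to+\infty$ as $t\to+\infty$, contradicting the upper bound $c_w$; hence $h$ is nonincreasing, and the symmetric argument with $t\to-\infty$ shows it is nondecreasing. Thus $h_w\equiv c_w$, which means $g(w+tv)=c(P(w))=f(w+tv)$ for every $t\in\R$.

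Finally, combining these conclusions across a.e.\ line $\ell_w$ gives $g=f$ almost everywhere on $\R^n$; since both $f$ and $g$ are real-valued convex functions on $\R^n$, they are continuous, and therefore $g=f$ everywhere. I expect no serious obstacle: the only subtlety is the one-variable lemma, and it follows immediately from standard slope monotonicity of convex functions.
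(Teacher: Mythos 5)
Your proof is correct and follows essentially the same strategy as the paper: slice $\R^n$ by lines parallel to a direction in $X^{\perp}$, use Fubini to find lines whose intersection with $\{f\neq g\}$ has finite measure, and then exploit the fact that a convex function on $\R$ that takes the same value along sequences tending to $\pm\infty$ must be constant. The only cosmetic difference is the Fubini decomposition — you fix a single $v\in X^{\perp}$ and foliate by parallel lines $\ell_w$, whereas the paper foliates by all lines through points of $X$ in all directions of $X^{\perp}$; your choice is slightly cleaner but the substance is identical.
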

\begin{proof}
Let $g:\R^n\to\R$ be a convex function such that $\mathcal{L}^{n}(A)<\infty$, where
$$
A:=\{x\in\R^n : f(x)\neq g(x)\}.
$$
Let $X^{\perp}$ stand for the orthogonal complement of $X$ in $\R^n$. By Fubini's theorem, for $\mathcal{H}^k$-almost every point $x\in X$, we have that for $\mathcal{H}^{n-k-1}$-almost every direction $v\in X^{\perp}, |v|=1$, the line
$$ 
L(x,v):=\{x+tv : t\in\R\}
$$
intersects $A$ in a set of finite $1$-dimensional measure. This implies that for all such $x\in X, v\in X^{\perp}$, the set $L(x,v)\cap (\R^n\setminus A)$ contains sequences 
$$
x_j^{\pm}:=x+t_{x,j}^{\pm} v\in \R^n\setminus A, j\in\N
$$
with $\lim_{j\to\pm\infty}t_{x,j}^{\pm}=\pm\infty$. Since $f=f\circ P$, this means that
$$
f(x)=f(x+t_{x,j}^{\pm}v)=g(x+t_{x,j}^{\pm}v),
$$
and because $t\mapsto g(x+tv)$ is convex we see that
$$
f(x+tv)=f(x)=g(x+tv)
$$
for all $t\in\R$ and every such $x$, $v$. By continuity of $f$ and $g$ this implies that
$$
f(x+tv)=g(x+tv)
$$ for all $x\in X$, $v\in X^{\perp}$, and this shows that $f=g$ on $\R^n$. 
\end{proof}

In light of Example~\ref{C11 does not suit unbounded domains}, Theorem~\ref{main theorem for C11 convex} and Corollary~\ref{main corollary for C11 convex} seem the best possible results for the Lusin property of class $C^{1,1}_{\rm conv}$.

Thus it is natural to consider the Lusin property of class $C^{1,1\, {\rm loc}}_{\rm conv}$ or $C^2_{\rm conv}$, where we do not have any restrictions on the growth of the second derivative. Clearly, if the function $f$ from Proposition~\ref{if f is not coercive nothing can be done globally} is not already $C^{1,1}_{\rm loc}$, it does not have the Lusin property of class $C^{1,1\, {\rm loc}}_{\rm conv}$ or $C^2_{\rm conv}$.
Under the assumptions of Proposition~\ref{if f is not coercive nothing can be done globally}, if $0\neq v\in X^\perp$, then the function $f$ is constant along the line $t\to tv$ and hence there is no linear function $\ell:\R^n\to\R$ such that $f(x)-\ell(x)\to\infty$ as $|x|\to\infty$. That is, $f$ is not essentially coercive, where the essential coercivity is defined as follows:

\begin{defn}
\label{essential coercivity}
We say that a convex function $f:\R^n\to\R$ is {\em essentially coercive} provided there exists a linear function $\ell:\R^n\to\R$ such that 
$$
\lim_{|x|\to\infty}\left( f(x)-\ell(x)\right)=\infty.
$$
\end{defn}

It is natural to wonder whether the lack of essential coercivity of a convex function defined on $\R^n$ is the only obstruction for it to satisfy a Lusin property of type $C^{1,1 \, {\rm loc}}_{{\rm conv}}(\R^n)$ or $C^2_{\textrm{conv}}(\R^n)$. Our second main result shows that this is indeed so in the $C^{1,1 \, \textrm{loc}}_{\textrm{conv}}(\R^n)$ case.

\begin{thm}
\label{main theorem for loc C11 convex}
Let $f:\R^n\to\R$ be a convex function, and assume that $f$ is not of class $C^{1,1}_{{\rm loc}}(\R^n)$. Then $f$ is essentially coercive if and only if for every $\varepsilon>0$ there exists a convex function $g:\R^n\to\R$ of class $C^{1,1}_{{\rm loc}}(\R^n)$ such that $\mathcal{L}^{n}\left(\{x\in \R^n : f(x)\neq g(x)\}\right)<\varepsilon$. 
\end{thm}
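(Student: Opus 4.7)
For the ``only if'' direction I would argue by contrapositive. The plan is to show that a convex $f$ which fails to be essentially coercive must factor as $f=\ell+c\circ P$, with $\ell$ linear, $P$ the orthogonal projection onto a proper linear subspace of $\R^n$, and $c$ convex; once this is established, Proposition~\ref{if f is not coercive nothing can be done globally} applied to $f-\ell=c\circ P$ shows that any convex $g$ with $\mathcal{L}^n(\{f\neq g\})<\infty$ must satisfy $g-\ell=c\circ P$, i.e.\ $g\equiv f$, contradicting $f\notin C^{1,1}_{\mathrm{loc}}$. To produce the factorization, I would analyse the recession function $f^\infty(v):=\lim_{t\to\infty}(f(x_0+tv)-f(x_0))/t$, a sublinear function independent of $x_0$. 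By a separation argument on the linear minorants of $f^\infty$ at $0$ (equivalently, by examining the interior of the subdifferential $\partial f^\infty(0)$), essential coercivity of $f$ is equivalent to the lineality space $L:=\{v:f^\infty(v)+f^\infty(-v)=0\}$ being trivial. Any unit $v\in L$ makes $t\mapsto f(x_0+tv)$ affine for every $x_0$, and a short convexity/Hessian computation---the only off-diagonal term between the $v$-direction and the transverse directions would have to vanish for positive semidefiniteness---forces the slope in direction $v$ to be a constant $b$, yielding $f=c\circ P+b\langle\cdot,v\rangle$ with $P$ the orthogonal projection onto $v^\perp$ and $c=f|_{v^\perp}$.

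\textbf{Sufficiency: the good set.} For the converse, after subtracting the witnessing linear form $\ell$ (an operation preserving both convexity and membership in $C^{1,1}_{\mathrm{loc}}$) we may assume $f$ is coercive. Then every sublevel set $\{f\le t\}$ is a compact convex body, $\nabla f$ is bounded on every ball $\overline{B(0,k)}$, and by Alexandrov's theorem $f$ is twice differentiable almost everywhere. The plan is to pick a closed set $E\subset\R^n$ with $\mathcal{L}^n(\R^n\setminus E)<\varepsilon$ on which $\nabla f$ is locally Lipschitz---Lipschitz on each $E\cap\overline{B(0,k)}$ with a constant $L_k$ allowed to grow with $k$, as the $C^{1,1}_{\mathrm{loc}}$ class permits---and then extend the $1$-jet $(f,\nabla f)|_E$ to a global convex $g\in C^{1,1}_{\mathrm{loc}}(\R^n)$. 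The set $E$ is built annulus by annulus: on each $\overline{B(0,k)}\setminus B(0,k-1)$, apply Lusin's theorem to the Alexandrov Hessian $D^2f$ to obtain a closed subset of measure-deficit $<\varepsilon/2^k$, on which $D^2 f$ is continuous and bounded; $E$ is then the union of these. Summing the deficits gives $\mathcal{L}^n(\R^n\setminus E)<\varepsilon$.

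\textbf{Sufficiency: extension and main obstacle.} The conclusion is obtained by invoking the $C^{1,1\,\mathrm{loc}}_{\mathrm{conv}}(\R^n)$ Whitney-type extension theorem for $1$-jets developed in~\cite{Azagra2019,AzagraLeGruyerMudarra,AzagraMudarra1,AzagraMudarra2}: applied to $(f,\nabla f)|_E$, it produces a convex $g\in C^{1,1}_{\mathrm{loc}}(\R^n)$ with $g=f$ and $\nabla g=\nabla f$ on $E$, whence $\{f\neq g\}\subset\R^n\setminus E$ and $\mathcal{L}^n(\{f\neq g\})<\varepsilon$. The main obstacle---and the step where the hypothesis of essential coercivity really has to be used---is verifying the hypotheses of this extension theorem. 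They split into a local Lipschitz condition on $\nabla f|_E$, immediate from the construction of $E$, and a global convexity-compatibility condition ruling out any degenerate behaviour of the jet at infinity. Coercivity of $f$ is what furnishes this global condition: the supremum of supporting affine functions $\sup_{y\in E}\bigl(f(y)+\langle\nabla f(y),\cdot-y\rangle\bigr)$ defines a finite convex minorant of $f$ on $\R^n$ precisely because $f$ has no affine direction---an affine direction would let the slopes of supporting functionals escape to infinity along that direction. Thus essential coercivity, which is necessary by the first half, is also precisely what unlocks the extension theorem in the second half.
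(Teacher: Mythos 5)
Your contrapositive argument is a legitimate alternate route. The paper simply cites Theorem~\ref{rigid global behaviour of convex functions} (the decomposition $f=c\circ P+\langle v,\cdot\rangle$ with $c$ essentially coercive) and then applies Proposition~\ref{if f is not coercive nothing can be done globally}, exactly as you do. You instead re-derive the decomposition from the recession function and its lineality space; that is a standard and correct way to prove Theorem~\ref{rigid global behaviour of convex functions}, though your ``off-diagonal Hessian'' justification for the slope being constant is informal, since the Alexandrov Hessian is only defined almost everywhere and some continuity/density argument is needed to promote $\nabla^2 f(x)v=0$ a.e.\ to a constant slope. Acceptable as a sketch, but it does not buy you anything the cited theorem does not already give.

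\textbf{Sufficiency.} Here there are two genuine gaps.

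First, the construction of the good set $E$. You propose to apply Lusin's theorem to the Alexandrov Hessian $D^2f$ on each annulus, getting a closed set on which $D^2f$ is continuous and bounded. That controls the \emph{second-order coefficient} of the Taylor expansion at points of $E$, but it does \emph{not} control the remainder $o(|x-y|^2)$ uniformly in $y\in E$: the radius over which $f(x)-f(y)-\langle\nabla f(y),x-y\rangle\leq C|x-y|^2$ holds can still shrink to zero as $y$ varies. The same objection applies to your claim that $\nabla f|_E$ is then Lipschitz on $E\cap\overline{B}(0,k)$---boundedness of $D^2f$ on a non-convex set $E$ does not bound $|\nabla f(x)-\nabla f(y)|/|x-y|$ for $x,y\in E$. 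What is actually needed, and what the paper constructs, is an Egorov-type exhaustion
\[
E_{j}=\Bigl\{y: f(x)-f(y)-\langle\nabla f(y), x-y\rangle \leq j |x-y|^2 \text{ for all } |x-y|\leq 1/j\Bigr\},
\]
which directly encodes a quadratic bound \emph{with a uniform radius} on each piece. Your Lusin-on-$D^2f$ step does not give this.

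Second, and more seriously, the hypotheses of the extension theorem are misidentified, and it is exactly the misidentified one where essential coercivity enters. Theorem~\ref{Corollary from Azagra2019} requires (i) a per-ball tangent-plane inequality
\[
f(z)+\langle G(z),x-z\rangle\leq f(y)+\langle G(y),x-y\rangle+\tfrac{A_k}{2}|x-y|^2,\qquad z\in E,\ y\in E\cap B(0,k),\ x\in B(0,4k),
\]
which involves all $x\in B(0,4k)$, not merely $x\in E$, so it is not ``a local Lipschitz condition on $\nabla f|_E$''; and (ii) the span condition $\operatorname{span}\{\nabla f(x)-\nabla f(y):x,y\in E\}=\R^n$. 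You describe (ii) as ``a global convexity-compatibility condition'' and claim coercivity guarantees that $\sup_{y\in E}\bigl(f(y)+\langle\nabla f(y),\cdot-y\rangle\bigr)$ is a finite convex minorant of $f$. But that supremum is \emph{always} a finite convex minorant of any real-valued convex $f$ (each term is a tangent plane lying below $f$), with or without coercivity, so this cannot be the place where the hypothesis is used. In the paper, essential coercivity is used to verify the span condition: if $\operatorname{span}\{\nabla f(x)-\nabla f(y):x,y\in C\}\neq\R^n$, there is $v\neq0$ with $\langle\nabla g(x),v\rangle=0$ on $C$ (for $g=f-$ tangent plane at some fixed $y_0$), and a Fubini argument produces a line parallel to $v$ meeting $C$ in a set accumulating at $\pm\infty$; then $g$ is constant along that line, contradicting essential coercivity. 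Your proposal never identifies this condition nor supplies the Fubini argument, so the sufficiency direction as written does not close.

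In short: the necessity half is fine (if a bit over-engineered), but the sufficiency half needs (a) an Egorov-type construction of $E$ controlling the quadratic remainder with a uniform radius, and (b) a verification of the span hypothesis of the $C^{1,1}_{\mathrm{loc}}$ convex Whitney theorem via Fubini and essential coercivity.
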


 As an easy application we obtain the following generalization of Corollary \ref{corollary for convex bodies}. We call $S$ a convex hypersurface of $\R^n$ provided that $S$ is the boundary of a closed convex set $W$ with nonempty interior (not necessarily bounded) in $\R^n$, and we say that $S$ is of class $C^{1,1}_{\textrm{loc}}$ whenever $S$ is a 1-codimensional $C^1$ submanifold of $\R^n$ such that the outer unit normal $N:S\to\mathbb{S}^{n-1}$ is a locally Lipschitz mapping. Equivalently, the Minkowski functional $\mu_{W}$ of $W$ is of class $C^{1,1}_{\textrm{loc}}$ on $\R^n\setminus \mu_{W}^{-1}(0)$.

\begin{cor}
\label{corollary for convex hypersurfaces}
Let $S$ be a convex hypersurface of $\R^n$, and assume that $S$ is not of class $C^{1,1}_{{\rm loc}}$. Then the following assertions are equivalent:
\begin{enumerate}
\item $S$ does not contain any line.
\item For every $\varepsilon>0$ there exists a convex hypersurface $S_{\varepsilon}$ of class $C^{1,1}_{\textrm{loc}}$ of $\R^n$ such that
$\mathcal{H}^{n-1}\left(S\setminus S_{\varepsilon}\right)< \varepsilon$.
\end{enumerate}
\end{cor}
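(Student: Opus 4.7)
The plan is to express $S$ as the level set $\{\mu_W=1\}$ of the Minkowski functional of the closed convex set $W$ with $\partial W=S$, and then reduce the corollary to Theorem~\ref{main theorem for loc C11 convex}. Translating, assume $0\in\mathrm{int}(W)$; then $\mu_W(x):=\inf\{t>0:x\in tW\}$ is convex, positively $1$-homogeneous and globally Lipschitz (with constant $M:=\sup_{|v|=1}\mu_W(v)$), $S=\{\mu_W=1\}$, and by the remark preceding the corollary, $S$ is of class $C^{1,1}_{\mathrm{loc}}$ iff $\mu_W$ is of class $C^{1,1}_{\mathrm{loc}}$ on $\R^n\setminus\mu_W^{-1}(0)$. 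The key dictionary is then: $S$ contains no line iff $W$ contains no line (recession cone theorem) iff $\mu_W$ is essentially coercive. The last equivalence uses convex duality: $\mu_W$ is the support function of the polar body $W^\circ$, and for a $1$-homogeneous sublinear function, essential coercivity means $\mu_W(u)\geq\langle p,u\rangle+c|u|$ for some $p$ and some $c>0$, which in turn says $p+B(0,c)\subset W^\circ$; and $W^\circ$ admits an interior point iff $W$ contains no line.

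For (1)$\Rightarrow$(2), assume $S$ is line-free. Then $\mu_W$ is essentially coercive and, by the standing hypothesis, not of class $C^{1,1}_{\mathrm{loc}}(\R^n)$, so Theorem~\ref{main theorem for loc C11 convex} furnishes, for any $\delta>0$, a convex $g\in C^{1,1}_{\mathrm{loc}}(\R^n)$ with $\mathcal{L}^n(\{\mu_W\neq g\})<\delta$. The main obstacle is upgrading this $\mathcal{L}^n$-bound to an $\mathcal{H}^{n-1}$-bound on the level set $S$; my tool is the coarea formula applied to the Lipschitz $\mu_W$,
\[
\int_0^\infty\mathcal{H}^{n-1}\bigl(\{\mu_W=t\}\cap\{\mu_W\neq g\}\bigr)\,dt\;\leq\;M\delta,
\]
combined with Sard's theorem for $g\in C^{1,1}_{\mathrm{loc}}$. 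Chebyshev's inequality in $t$ together with Sard's theorem produces a $t^*\in[\tfrac12,\tfrac32]$ that is simultaneously a regular value of $g$ and satisfies $\mathcal{H}^{n-1}(\{\mu_W=t^*\}\cap\{\mu_W\neq g\})\leq 2M\delta$. Define $S_\varepsilon:=(t^*)^{-1}\{g=t^*\}$; being a rescaled regular level set of a convex $C^{1,1}_{\mathrm{loc}}$ function, $S_\varepsilon$ is a convex hypersurface of class $C^{1,1}_{\mathrm{loc}}$. The $1$-homogeneity of $\mu_W$ ensures that the dilation $x\mapsto t^*x$ sends $S\setminus S_\varepsilon$ injectively into $\{\mu_W=t^*\}\cap\{\mu_W\neq g\}$ and scales $\mathcal{H}^{n-1}$ by $(t^*)^{n-1}$, yielding
\[
\mathcal{H}^{n-1}(S\setminus S_\varepsilon)\;\leq\;(t^*)^{-(n-1)}\cdot 2M\delta\;\leq\;2^n M\delta,
\]
so choosing $\delta<\varepsilon/(2^nM)$ concludes this direction.

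For (2)$\Rightarrow$(1), I argue by contrapositive. Assume $S$ contains a line $L$ with direction $v$; the recession cone theorem then forces $W=W_0+\R v$ and $S=\partial W_0+\R v$ with $W_0\subset v^\perp$ closed convex of nonempty interior. Let $S_\varepsilon=\partial W_\varepsilon$ be any convex hypersurface of class $C^{1,1}_{\mathrm{loc}}$ with $\mathcal{H}^{n-1}(S\setminus S_\varepsilon)<\infty$. The isometric parameterization $\partial W_0\times\R\cong S$ via $(q,t)\mapsto q+tv$ yields
\[
\mathcal{H}^{n-1}(S\setminus S_\varepsilon)=\int_{\partial W_0}\mathcal{H}^1\bigl(L_q\setminus S_\varepsilon\bigr)\,d\mathcal{H}^{n-2}(q),
\]
with $L_q:=q+\R v$. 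Hence for $\mathcal{H}^{n-2}$-a.e.\ $q\in\partial W_0$, the complement of $L_q\cap S_\varepsilon$ in $L_q$ has finite $\mathcal{H}^1$-measure, so $L_q\cap W_\varepsilon$ contains sequences tending to $\pm\infty$ on $L_q$, and closedness-plus-convexity of $W_\varepsilon$ forces $L_q\subset W_\varepsilon$. Consequently $\pm v\in\mathrm{rec}(W_\varepsilon)$, $W_\varepsilon=W_{\varepsilon,0}+\R v$, and $S_\varepsilon=\partial W_{\varepsilon,0}+\R v$. Reapplying the cylindrical Fubini formula gives $\mathcal{H}^{n-1}(S\setminus S_\varepsilon)=\mathcal{H}^{n-2}(\partial W_0\setminus\partial W_{\varepsilon,0})\cdot\mathcal{H}^1(\R)$, which is finite only when $\mathcal{H}^{n-2}(\partial W_0\setminus\partial W_{\varepsilon,0})=0$. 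Since $\mathcal{H}^{n-2}$ is locally positive on the $(n-2)$-dimensional topological manifold $\partial W_0$, closedness of $\partial W_{\varepsilon,0}$ then yields $\partial W_0\subset\partial W_{\varepsilon,0}$; invariance of domain shows $\partial W_0$ is clopen in $\partial W_{\varepsilon,0}$ and therefore a union of connected components of the $C^{1,1}_{\mathrm{loc}}$ manifold $\partial W_{\varepsilon,0}$. In particular $\partial W_0$ is itself of class $C^{1,1}_{\mathrm{loc}}$, whence $S=\partial W_0+\R v$ is of class $C^{1,1}_{\mathrm{loc}}$, contradicting the standing hypothesis.
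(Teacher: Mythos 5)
Your proof of $(1)\Rightarrow(2)$ is essentially the paper's argument: pass to the Minkowski functional $\mu_W$, apply Theorem~\ref{main theorem for loc C11 convex} to get a convex $g\in C^{1,1}_{\rm loc}$ agreeing with $\mu_W$ off a set of small $\mathcal{L}^n$-measure, then use the coarea formula and Sard (trivial for convex $C^1$ functions, since there is at most one critical value) to find a good regular level $t^*$ and rescale; the paper takes $t_0\in(1,2)$ so that the factor $t_0^{-(n-1)}\le 1$, whereas your $t^*\in[1/2,3/2]$ simply costs a harmless factor $2^{n-1}$. You obtain the dictionary (line-free $\Leftrightarrow$ essentially coercive) via polar duality, identifying $\mu_W$ with the support function of $W^\circ$; the paper proves the same equivalences by a short direct lemma. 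Both are fine.

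Where you genuinely diverge from the paper is the $(2)\Rightarrow(1)$ direction. The paper stays inside $\R^n$: it applies the coarea formula to $\mu$ on the region $\mu^{-1}((0,a])$, then Fubini for $\mathcal{L}^n$ to show that almost every line parallel to the given direction meets $\R^n\setminus C_a$ in finite length, forces $\mu=\mu_\varepsilon$ along those lines by one-dimensional convexity, and concludes $\mu=\mu_\varepsilon$ everywhere off $\mu^{-1}(0)$, hence $S=S_\varepsilon$. You instead work intrinsically on the surface: you split $S=\partial W_0+\R v$, disintegrate $\mathcal{H}^{n-1}\mathbin{\llcorner} S$ as $\mathcal{H}^{n-2}\mathbin{\llcorner}\partial W_0\otimes\mathcal{H}^1$, deduce that a.e.\ slice line lies in $W_\varepsilon$, infer the cylindrical structure of $W_\varepsilon$, and then use positivity of $\mathcal{H}^{n-2}$ on open pieces of $\partial W_0$ together with invariance of domain to conclude $\partial W_0$ is a union of components of $\partial W_{\varepsilon,0}$, hence of class $C^{1,1}_{\rm loc}$. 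This is a valid and rather more geometric route; it even yields the slightly weaker-looking (but sufficient) conclusion ``$S$ is $C^{1,1}_{\rm loc}$'' without first establishing $S=S_\varepsilon$. The price is the disintegration formula $\mathcal{H}^{n-1}(E)=\int_{\partial W_0}\mathcal{H}^1(E_q)\,d\mathcal{H}^{n-2}(q)$ for $E\subset\partial W_0\times\R$, which is not a general fact about Hausdorff measures of products and needs the area formula for the local Lipschitz-graph parameterization of $\partial W_0$ (noting the gradient has no $t$-component, so the Jacobian factor is $t$-independent). You should say that explicitly; it is the one step in your argument that a careful reader will want justified. The paper's route avoids this entirely by doing all the measure theory in $\R^n$ with $\mathcal{L}^n$ and the standard coarea formula, which is why it is arguably cleaner, while yours is perhaps more transparent about the underlying product geometry.
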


\section{Preliminaries and tools}

In this section we explain some known results and techniques which we will use in the proofs of our main results.

A convex function $f:\Omega\to\R$ defined on an open and convex set $\Omega\subset\R^n$ is locally Lipschitz and hence differentiable almost everywhere by Rademacher's theorem. In fact the following result is true: 

\begin{thm}
\label{T1}
If a convex function
$f:\Omega\to\R$ is defined on an open and convex set $\Omega\subset\R^n$, and $D\subset\Omega$ is the set of points where $f$ is differentiable, then $\nabla f|_D$ is continuous.
\end{thm}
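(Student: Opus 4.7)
The plan is to deduce continuity of $\nabla f|_D$ from the upper semicontinuity of the subdifferential multifunction $x \mapsto \partial f(x)$, combined with the fact that $\partial f(x) = \{\nabla f(x)\}$ at every $x \in D$. Since $f$ is locally Lipschitz on $\Omega$, gradients at nearby points lie in a bounded set, so the argument reduces to identifying subsequential limits of $\nabla f(x_k)$ as elements of $\partial f(x)$.

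First I would recall that for a convex $f:\Omega\to\R$, the subdifferential
$$\partial f(x) = \{v \in \R^n : f(y) \ge f(x) + \langle v, y-x\rangle \text{ for all } y \in \Omega\}$$
is nonempty, convex, and compact at each interior point, and that $f$ is differentiable at $x$ if and only if $\partial f(x)$ is a singleton, in which case $\partial f(x) = \{\nabla f(x)\}$. Next I would prove the upper semicontinuity property: if $x_k \to x$ in $\Omega$ and $v_k \in \partial f(x_k)$ with $v_k \to v$, then $v \in \partial f(x)$. This is immediate by passing to the limit in the defining inequality $f(y) \ge f(x_k) + \langle v_k, y-x_k\rangle$ and using continuity of $f$.

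Now suppose $x_k \in D$ with $x_k \to x \in D$, and set $v_k := \nabla f(x_k)$. Choose a compact convex neighborhood $K \subset \Omega$ of $x$ containing all $x_k$ for large $k$; since $f$ is Lipschitz on $K$ with some constant $L$, the subdifferential inequality forces $|v_k| \le L$. Hence $\{v_k\}$ is bounded. If $v_{k_j} \to v$ is any convergent subsequence, the upper semicontinuity step yields $v \in \partial f(x) = \{\nabla f(x)\}$, so $v = \nabla f(x)$. A bounded sequence all of whose convergent subsequences have the same limit must itself converge to that limit, so $\nabla f(x_k) \to \nabla f(x)$.

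There is no serious obstacle: the only mildly delicate point is being careful that $\partial f(x_k)$ is defined in $\Omega$ (not just on $\R^n$) and that the Lipschitz bound is uniform on a neighborhood of $x$, both of which follow from the standard local boundedness of convex functions on open convex sets. I would just verify these two facts explicitly before concluding sequential continuity of $\nabla f|_D$, which is equivalent to continuity since $D \subset \R^n$ is first countable.
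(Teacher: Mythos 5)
Your proof is correct. The paper does not supply its own argument for Theorem~\ref{T1}; it cites Roberts--Varberg (Theorem~IV.E of \cite{RV}) and an elementary argument in Kirchheim--Kristensen \cite{KirchheimKristensen}. Your route via upper semicontinuity (closedness) of the subdifferential graph together with the local Lipschitz bound is exactly the standard one used in those references, and every step --- nonemptiness and singleton characterization of $\partial f$ at points of $D$, closedness under limits from the defining inequality, the bound $|v_k|\le L$ from the local Lipschitz constant, and the subsequential uniqueness argument --- is sound, so nothing further is needed.
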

For a proof see e.g. \cite[Theorem~IV.E]{RV}. An elementary and a straightforward  argument can also be found
in \cite[p.727]{KirchheimKristensen}.

According to Aleksandrov's theorem \cite{Alexandroff}, at almost every point $x$, where $f$ is differentiable, there is a symmetric $n\times n$ matrix $\nabla^2 f(x)$ such that
\begin{equation}
\label{eq2}
\lim_{y\to x}\frac{f(y)-f(x)-\langle \nabla f(x), y-x\rangle-\frac{1}{2}\langle\nabla^{2}f(x)(y-x), y-x\rangle}{|y-x|^2}=0.
\end{equation}
For modern proofs, see for example \cite[Theorem~7.10]{AlbertiAmbrosio}, \cite{BCP},
\cite[Theorem~A.2]{CIL}
\cite[Theorem~6.9]{EvansGariepy}.

A common technique of showing that a class of functions has a Lusin property of class $C^{k}$ is based on the Whitney extension theorem. For example, it follows from the Aleksandrov theorem that a convex function satisfies the assumptions of a $C^2$-version of the Whitney extension theorem outside a set of an arbitrarily small measure and hence the class $C_{\rm conv}(\R^n)$ has the Lusin property of class $C^2(\R^n)$, see \cite{Alberti2,EvansGangbo,Imomkulov}. Unfortunately, Whitney's construction does not preserve convexity.
Instead we will use results and techniques from \cite{Azagra2019,AzagraLeGruyerMudarra,AzagraMudarra1,AzagraMudarra2} which we next review.

\begin{thm}
$($\cite[Corollary 1.3]{AzagraMudarra1} and \cite[Theorem 2.4]{AzagraLeGruyerMudarra}$)$.
\label{C11 Whitney thm for convex}
Let $E$ be an arbitrary subset of $\R^n$. Let $f:E\to\R$, $G:E\to\R^n$ be given functions. Then there exists a convex function $F\in C^{1,1}(\R^n)$ with $F=f$ and $\nabla F=G$ on $E$ if and only if there exists a number $M>0$ such that
\begin{equation}
\label{CW11}
f(x)-f(y)-\langle G (y), x-y \rangle \geq \frac{1}{2M} | G (x)- G(y) |^{2}
\end{equation}
for all $x,y\in E$. In fact, the formula  
$$
F={\rm conv}\left(x\mapsto \inf_{y\in E}\left\{f(y)+\langle G(y), x-y\rangle+\frac{M}{2}|x-y|^2\right\}\right)
$$
defines such an extension, with the additional property that ${\rm Lip}(\nabla F)\leq M$.
\end{thm}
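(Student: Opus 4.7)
\textbf{Necessity.} Given $F\in C^{1,1}(\R^{n})$ convex with ${\rm Lip}(\nabla F)\le M$, fix $y\in E$ and consider the auxiliary function $\psi(x):=F(x)-\langle\nabla F(y),x\rangle$, which is convex with $M$-Lipschitz gradient and has $y$ as a global minimizer (since $\nabla\psi(y)=0$). Applying the descent lemma at an arbitrary $x$ with gradient step $x-\tfrac{1}{M}\nabla\psi(x)$ yields
$$
\psi(y)\le\psi\bigl(x-\tfrac{1}{M}\nabla\psi(x)\bigr)\le\psi(x)-\tfrac{1}{2M}|\nabla\psi(x)|^{2},
$$
and unpacking definitions with $\nabla\psi(x)=\nabla F(x)-\nabla F(y)$ gives \eqref{CW11}.

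\textbf{Sufficiency via Fenchel duality.} Set $\phi_{y}(x):=f(y)+\langle G(y),x-y\rangle+\tfrac{M}{2}|x-y|^{2}$ and $m(x):=\inf_{y\in E}\phi_{y}(x)$, so the formula in the statement reads $F={\rm conv}(m)$. Each $\phi_{y}$ is a paraboloid with Hessian $MI$, and its Fenchel conjugate
$$
\phi_{y}^{*}(z)=\langle z,y\rangle-f(y)+\tfrac{1}{2M}|z-G(y)|^{2}
$$
is a paraboloid with Hessian $\tfrac{1}{M}I$. Since $m=\inf_{y}\phi_{y}$ one has $m^{*}=\sup_{y\in E}\phi_{y}^{*}$, and expanding the square isolates the purely quadratic-in-$z$ piece:
$$
m^{*}(z)=\tfrac{1}{2M}|z|^{2}+\sup_{y\in E}\bigl(\langle z,\,y-G(y)/M\rangle+\tfrac{1}{2M}|G(y)|^{2}-f(y)\bigr).
$$
Thus $m^{*}$ is lower semicontinuous and $\tfrac{1}{M}$-strongly convex, being the sum of a $\tfrac{1}{M}$-strongly convex quadratic and a supremum of affines. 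Properness of $m^{*}$ is checked by plugging $z=G(y_{0})$ for a fixed $y_{0}\in E$ and using \eqref{CW11} to bound the inner supremum by $\langle G(y_{0}),y_{0}\rangle-f(y_{0})$. By Fenchel--Moreau, $F:=m^{**}$ coincides with ${\rm conv}(m)$, and the standard duality between $\tfrac{1}{M}$-strong convexity of $m^{*}$ and $M$-Lipschitz smoothness of $m^{**}$ delivers $F\in C^{1,1}(\R^{n})$ convex with ${\rm Lip}(\nabla F)\le M$.

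\textbf{Matching the jet on $E$.} For $y,y'\in E$, one application of \eqref{CW11} at the pair $(y',y)$ followed by completing the square gives
$$
\phi_{y'}(y)-f(y)\ge\tfrac{1}{2M}\bigl|M(y-y')+G(y')-G(y)\bigr|^{2}\ge 0,
$$
so $m(y)=f(y)$ and $F\le m$ yields $F(y)\le f(y)$ on $E$. Similarly, one application of \eqref{CW11} at the pair $(y,y')$ followed by completing the square shows that each affine $a_{y'}(x):=f(y')+\langle G(y'),x-y'\rangle$ satisfies $a_{y'}\le\phi_{y}$ for every $y\in E$; hence $a_{y'}\le m$ and, since $F$ is the largest convex minorant of $m$, $a_{y'}\le F$. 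Taking $y'=y$ yields $F(y)\ge f(y)$. Finally, $\phi_{y}-F\ge 0$ attains its minimum $0$ at $y$, and differentiability of $F$ forces $\nabla F(y)=\nabla\phi_{y}(y)=G(y)$.

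\textbf{Main obstacle.} The technically delicate point is the $C^{1,1}$ regularity of $F$ with the sharp constant $M$. A direct approach would try to supply, at each $x_{0}\in\R^{n}$, a supporting paraboloid $z\mapsto F(x_{0})+\langle v,z-x_{0}\rangle+\tfrac{M}{2}|z-x_{0}|^{2}$ majorising $F$; this is easy where $F(x_{0})=m(x_{0})$ by the semiconcavity of $m$, but becomes awkward at points lying in a proper face of the epigraph of ${\rm conv}(m)$, where $F<m$ strictly. Passing to the Fenchel conjugate sidesteps the facial analysis entirely by replacing the $C^{1,1}$ requirement with the elementary strong convexity of $m^{*}$.
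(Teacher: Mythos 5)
This theorem appears in the paper as a cited result (from \cite{AzagraMudarra1} and \cite{AzagraLeGruyerMudarra}); the paper itself contains no proof of it, so there is no in-paper argument to compare against. Your proof, however, is correct and self-contained. The necessity direction is the standard descent-lemma derivation of the co-coercivity inequality from $M$-Lipschitz continuity of $\nabla F$ and convexity, and it is fine as written. The sufficiency direction via Fenchel conjugation is a clean route: computing $\phi_y^*$, using $(\inf_y\phi_y)^*=\sup_y\phi_y^*$, and isolating the $\frac{1}{2M}|z|^2$ term exposes the $\frac{1}{M}$-strong convexity of $m^*$ at a glance, and the strong-convexity/Lipschitz-smoothness duality then delivers $F=m^{**}\in C^{1,1}$ with $\operatorname{Lip}(\nabla F)\le M$. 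Your jet-matching is also correct: \eqref{CW11} applied at $(y',y)$ gives $\phi_{y'}(y)-f(y)=\frac{1}{2M}\,|M(y-y')+G(y')-G(y)|^2\ge 0$ (after completing the square), so $m=f$ on $E$ and $F\le f$ there; \eqref{CW11} applied at $(y,y')$ is precisely condition (b) of Lemma~\ref{L1}, which says each tangent affine map $a_{y'}$ lies below every $\phi_y$, hence below $m$, hence below $F$, giving $F\ge f$ on $E$; and $\nabla F(y)=G(y)$ because $\phi_y-F\ge 0$ vanishes at $y$.

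Two small points you should make explicit rather than elide. First, properness of $m^*$ forces $m>-\infty$ everywhere (else $m^*\equiv+\infty$), and this is what justifies both the inequality $m^{**}\le m$ and the identification $m^{**}=\operatorname{conv}(m)$: once you know $m^{**}$ is finite-valued (which follows from the $C^{1,1}$ conclusion), $\operatorname{conv}(m)$ is squeezed between $m^{**}$ and $m$, hence finite-valued, hence continuous and closed, hence equal to $m^{**}$. Second, in the properness check your stated upper bound for the \emph{inner} supremum at $z=G(y_0)$ is slightly off; the clean statement is $m^*(G(y_0))\le\langle G(y_0),y_0\rangle-f(y_0)$ after the outer quadratic is re-absorbed (the inner supremum itself is bounded by $\langle G(y_0),y_0\rangle-f(y_0)-\frac{1}{2M}|G(y_0)|^2$). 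Neither point affects the validity of the argument. The reference the paper singles out for an ``elementary and much simpler'' proof establishes the $C^{1,1}$ regularity of $\operatorname{conv}(m)$ by a direct real-variable argument exploiting the semiconcavity of $m$ (each $\phi_y$ has Hessian $MI$) and the sandwiching of $\operatorname{conv}(m)$ between the affine minorants $a_{y'}$ and $m$; your duality argument is a genuinely different packaging of the regularity step, trading the facial analysis of $\operatorname{epi}(\operatorname{conv}(m))$ for the Fenchel--Moreau and strong-convexity/smoothness correspondences.
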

Here $\textrm{conv}(x\mapsto g(x))$ denotes the convex envelope of the function $g$, that is, the largest convex function $\varphi$ such that $\varphi\leq g$.

This result was first proved in \cite[Corollary 1.3]{AzagraMudarra1}, but the proof given in 
\cite[Theorem 2.4]{AzagraLeGruyerMudarra} is elementary and much simpler. The next elementary lemma shows another condition that is equivalent to \eqref{CW11}. In fact, in our proofs we will apply Theorem~\ref{C11 Whitney thm for convex} by verifying condition (b) from below.
\begin{lem}
\label{L1}
Let $E$ be an arbitrary subset of $\R^n$. Let $f:E\to\R$, $G:E\to\R^n$ be given functions and let $M>0$ be a given constant. Then the following conditions are equivalent:
\begin{itemize}
\item[(a)] $\displaystyle f(x)-f(y)-\langle G (y), x-y \rangle \geq \frac{1}{2M} | G (x)- G(y) |^{2}$ for all $x,y\in E$;
\item[(b)]
$\displaystyle f(z)+\langle G(z), x-z\rangle\leq f(y)+\langle G(y), x-y\rangle +\frac{M}{2}|x-y|^2$ for all $y,z\in E$ and all $x\in\R^n$.
\end{itemize}
\end{lem}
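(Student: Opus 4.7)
The plan is to recognise this as a Legendre/Young-duality statement: the quadratics $\tfrac{M}{2}|\cdot|^2$ and $\tfrac{1}{2M}|\cdot|^2$ are Fenchel conjugates, so (b)---which involves only the first---should be an ``infimal-convolution'' reformulation of (a), which involves only the second. Both directions then reduce to short algebraic manipulations in $\R^n$; I do not expect any real obstacle, as Young's inequality (equivalently, completing the square in a concave quadratic) does all the work.

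For (a) $\Rightarrow$ (b), I would fix $y,z\in E$ and $x\in\R^n$ and algebraically split the difference of the two sides of (b) as
\[
\bigl[f(z)+\langle G(z),x-z\rangle\bigr]-\bigl[f(y)+\langle G(y),x-y\rangle\bigr] = \bigl[f(z)-f(y)-\langle G(z),z-y\rangle\bigr] + \langle G(z)-G(y),x-y\rangle.
\]
Applying hypothesis (a) with the pair $(y,z)$ substituted for $(x,y)$ bounds the first bracket on the right by $-\tfrac{1}{2M}|G(z)-G(y)|^2$, while Young's inequality $ab\leq \tfrac{a^2}{2M}+\tfrac{M}{2}b^2$ bounds the cross term by $\tfrac{1}{2M}|G(z)-G(y)|^2+\tfrac{M}{2}|x-y|^2$. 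The two $\tfrac{1}{2M}|G(z)-G(y)|^2$ contributions cancel, leaving exactly $\tfrac{M}{2}|x-y|^2$, which is (b).

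For (b) $\Rightarrow$ (a), I would fix $y,z\in E$, substitute $u=x-y$ in (b), and rearrange to
\[
\langle G(z)-G(y),u\rangle - \tfrac{M}{2}|u|^2 \leq f(y)-f(z)-\langle G(z),y-z\rangle \quad\text{for every } u\in\R^n.
\]
The left-hand side is a concave quadratic in $u$ maximised at $u=(G(z)-G(y))/M$ with maximum value $\tfrac{1}{2M}|G(z)-G(y)|^2$, and evaluating at this optimum yields
\[
\tfrac{1}{2M}|G(z)-G(y)|^2 \leq f(y)-f(z)-\langle G(z),y-z\rangle \quad\text{for all } y,z\in E.
\]
Relabelling $(y,z)$ as $(x,y)$ turns this into exactly condition (a), completing the equivalence.
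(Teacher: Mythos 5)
Your proof is correct, and the (b)$\Rightarrow$(a) direction follows essentially the same route as the paper: rearrange (b) into a concave quadratic inequality valid for all $x\in\R^n$ and evaluate at the maximizer $x-y=\tfrac{1}{M}(G(z)-G(y))$, i.e.\ complete the square. For (a)$\Rightarrow$(b) the paper simply cites \cite[Lemma 2.6]{AzagraLeGruyerMudarra}, whereas you give a short self-contained argument via the decomposition
\[
\bigl[f(z)+\langle G(z),x-z\rangle\bigr]-\bigl[f(y)+\langle G(y),x-y\rangle\bigr]=\bigl[f(z)-f(y)-\langle G(z),z-y\rangle\bigr]+\langle G(z)-G(y),x-y\rangle
\]
together with Young's inequality; I checked the algebra and the cancellation of the $\tfrac{1}{2M}|G(z)-G(y)|^2$ terms, and it works. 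This makes the lemma independent of the external reference, which is a small but genuine improvement in exposition; mathematically it is the same Legendre/Young duality the cited lemma rests on.
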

\begin{proof}
Implication from (a) to (b) is just \cite[Lemma 2.6]{AzagraLeGruyerMudarra}.
Thus it remains to show that (b) implies (a). Renaming variables allows us to rewrite (b) as
$$
f(y)+\langle G(y),\xi-y\rangle\leq f(x)+\langle G(x),\xi-x\rangle +\frac{M}{2}|\xi-x|^2
\quad
\text{for $x,y\in E$ and $\xi\in\R^n$,}
$$
which is equivalent to
$$
f(x)-f(y)-\langle G(y),x-y\rangle \geq \langle G(y)-G(x),\xi-x\rangle -\frac{M}{2}|\xi-x|^2,
\quad
\text{for $x,y\in E$ and $\xi\in\R^n$.}
$$
Since the inequality is true for all $\xi\in\R^n$, we can take $\xi$ such that $\xi-x=\frac{1}{M}(G(y)-G(x))$ and (a) follows.
\end{proof}

In the same spirit, a more complicated version of Theorem~\ref{C11 Whitney thm for convex}
for $C^{1,1}_{\textrm{loc}}$ convex extensions of $1$-jets has been established in \cite[Theorem~1.10]{Azagra2019}. 
However, we will need the following special case which is easier to state.
\begin{thm}
$($\cite[Theorem~1.3]{Azagra2019}$)$.
\label{Corollary from Azagra2019}
Let $E$ be an arbitrary nonempty subset of $\R^n$. Let $f:E\to\R$, $G:E\to\R^n$ be functions such that 
\begin{equation}\label{essentially coercive data corollary 2}
{\rm span}\{G(x)-G(y) : x, y\in E\}=\R^n.
\end{equation}
Then there exists a convex function $F\in C^{1,1}_{{\rm loc}}(\R^n)$ such that $F_{|_E}=f$ and $(\nabla F)_{|_E}=G$ if and only if for each $k\in \N$ there exists a number $A_k\geq 2$ such that
\begin{equation}\label{every tangent function lies above all tangent planes corollary 2}
f(z)+\langle G(z), x-z\rangle
\leq f(y)+\langle G(y), x-y\rangle +\frac{A_k}{2}|x-y|^2
\end{equation}
for every $z\in E$, $y\in E\cap B(0, k)$, $x\in B(0, 4k)$.

\end{thm}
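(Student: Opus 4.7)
The plan is to prove the two directions separately; necessity is immediate, while sufficiency is a substantial construction in which the span hypothesis plays a decisive role. For \emph{necessity}, assume $F\in C^{1,1}_{\rm loc}(\R^n)$ is convex with $F|_E=f$ and $\nabla F|_E=G$. Fix $k\in\N$ and let $L_k$ be the Lipschitz constant of $\nabla F$ on $B(0,4k)$. The classical descent inequality for $C^{1,1}$ functions gives $F(x)\leq F(y)+\langle\nabla F(y),x-y\rangle+\tfrac{L_k}{2}|x-y|^2$ for all $x,y\in B(0,4k)$, while convexity of $F$ on $\R^n$ gives the global support-plane bound $F(z)+\langle\nabla F(z),x-z\rangle\leq F(x)$ for all $z,x\in\R^n$. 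Chaining these two with $z\in E$, $y\in E\cap B(0,k)$ and $x\in B(0,4k)$, and setting $A_k:=\max(L_k,2)$, yields \eqref{every tangent function lies above all tangent planes corollary 2}; note that the span hypothesis is not used in this direction.

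For \emph{sufficiency}, the strategy is to invoke Theorem~\ref{C11 Whitney thm for convex} on bounded pieces of $E$ and then glue the pieces into a single global function. Fix $k\in\N$, set $E_k:=E\cap\overline{B(0,k)}$, and observe that \eqref{every tangent function lies above all tangent planes corollary 2} is exactly condition~(b) of Lemma~\ref{L1} on $E_k$, but with $x$ restricted to $B(0,4k)$ rather than ranging over all of $\R^n$. A short argument---using \eqref{every tangent function lies above all tangent planes corollary 2} itself to bound $\sup_{E_k}|G|$ and hence control the location of the auxiliary point $\xi=x+M^{-1}(G(y)-G(x))$ that appears in the proof of Lemma~\ref{L1}---shows that the restricted form is still enough to derive \eqref{CW11} on $E_k$, provided we replace $A_k$ by a suitable $M_k\gtrsim A_k$. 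Theorem~\ref{C11 Whitney thm for convex} then produces a convex extension $F_k\in C^{1,1}(\R^n)$ of $(f,G)|_{E_k}$ with ${\rm Lip}(\nabla F_k)\leq M_k$, and a natural candidate for the global extension is
$$F(x):={\rm conv}\Bigl(x\mapsto\inf_{y\in E}\bigl\{f(y)+\langle G(y),x-y\rangle+\varphi(x,y)\bigr\}\Bigr),$$
where $\varphi(x,y)$ is a quadratic-type weight chosen to dominate $\tfrac{M_k}{2}|x-y|^2$ whenever $x\in B(0,k)$, so that on each ball the local $C^{1,1}$ construction of Theorem~\ref{C11 Whitney thm for convex} is effectively reproduced.

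The central obstacle, and the place where the span hypothesis ${\rm span}\{G(x)-G(y):x,y\in E\}=\R^n$ is indispensable, is showing that this global $F$ is finite, convex, of class $C^{1,1}_{\rm loc}$, and interpolates the jet $(f,G)$ on all of $E$ simultaneously. Without the span condition, the slopes $\{G(y):y\in E\}$ would be confined to a proper affine subspace, the affine minorants $y\mapsto f(y)+\langle G(y),\cdot-y\rangle$ would be constant along a nontrivial direction, and the infimum could degenerate to $-\infty$ along that direction---precisely the obstruction already identified in Proposition~\ref{if f is not coercive nothing can be done globally}. Under the span hypothesis one extracts a finite subfamily $y_0,\ldots,y_n\in E$ with $\{G(y_i)-G(y_0)\}_{i=1}^n$ a basis of $\R^n$, uses the corresponding affine lower bounds to force $F$ to be essentially coercive, and then verifies via Lemma~\ref{L1} that each $y\in E$ is a point of tangency of its affine plane to $F$. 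This last step must appeal to the \emph{global} form of \eqref{every tangent function lies above all tangent planes corollary 2} to handle interactions between points lying in different balls, and is the most delicate part of the argument.
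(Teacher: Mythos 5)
You should first note that the paper does not prove this statement at all: it is quoted verbatim as \cite[Theorem~1.3]{Azagra2019} and used as a black box, so there is no internal proof to compare against and your attempt has to stand on its own. Your necessity direction does stand: combining the descent inequality $F(x)\le F(y)+\langle\nabla F(y),x-y\rangle+\tfrac{L_k}{2}|x-y|^2$ on $B(0,4k)$ with the global support-plane inequality for the convex function $F$ gives \eqref{every tangent function lies above all tangent planes corollary 2} with $A_k=\max(L_k,2)$, and you are right that the span hypothesis plays no role there.

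The sufficiency direction, however, has a genuine gap: it is a plan, not a proof, and the plan defers exactly the content of the theorem. Three specific problems. First, the local extensions $F_k$ of $(f,G)|_{E_k}$ produced by Theorem~\ref{C11 Whitney thm for convex} cannot be glued: they need not agree anywhere off $E_k$, and convexity is destroyed by partitions of unity, so producing them buys nothing toward a global $F$. Second, your ``natural candidate'' $F=\mathrm{conv}\bigl(\inf_{y\in E}\{f(y)+\langle G(y),x-y\rangle+\varphi(x,y)\}\bigr)$ leaves $\varphi$ unspecified, yet every nontrivial assertion of the theorem lives in that choice: one must show the infimum admits a global affine minorant (the hypotheses only bound the tangent plane at a far point $z$ \emph{from above} by near data, which is the wrong direction for finiteness of the convex envelope), that the envelope is of class $C^{1,1}_{\rm loc}$ even though the quadratic weights are not uniform in $k$, and that the envelope is tangent to the prescribed jet at \emph{every} point of $E$, including interactions between points in different annuli. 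You explicitly label this ``the most delicate part of the argument'' and stop; that is the theorem, not a detail. Third, the role of the span condition is asserted (``forces $F$ to be essentially coercive'') rather than derived --- one must actually show that the $n+1$ affine minorants coming from a spanning subfamily control the envelope from below globally and that this coercivity is what rescues both finiteness and the tangency verification. The one step you do gesture at correctly --- that \eqref{every tangent function lies above all tangent planes corollary 2} bounds $\sup_{E_k}|G|$ and hence lets the Lemma~\ref{L1} argument run with $\xi\in B(0,4k)$ --- is fine, but it only yields the local statement, which is the easy part.
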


We will also need: 

\begin{thm}
$($\cite[Theorem 1.11]{AzagraMudarra2} and \cite[Lemma 4.2]{Azagra2013}$)$.
\label{rigid global behaviour of convex functions}
For every convex function $f:\R^n\to\R$, there exist a unique linear subspace $X$ of $\R^n$, a unique vector 
$v\in X^{\perp}$, and a unique essentially coercive function $c:X\to\R$ such that $f$ can be written in the form
$$
f(x)=c(P(x)) +\langle v, x\rangle 
\quad
\text{for all $x\in\R^n$,}
$$
where $P:\R^n\to X$ is the orthogonal projection.
\end{thm}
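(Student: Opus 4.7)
The plan is to define $X$ intrinsically as the orthogonal complement of the subspace of directions along which $f$ is affine, and then to read off $v$ and $c$ from the behaviour of $f$ along, and orthogonal to, that subspace.

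I would first introduce the recession function
$$f^{\infty}(w):=\lim_{t\to+\infty}\frac{f(x_0+tw)-f(x_0)}{t}\in(-\infty,+\infty],$$
which is well defined, independent of $x_0\in\R^n$, and sublinear in $w$. Set
$$Y:=\{w\in\R^n : f^{\infty}(w)+f^{\infty}(-w)=0\}.$$
A one-dimensional convex analysis argument (the two asymptotic slopes of a convex function on $\R$ sum to zero iff the function is affine) shows that $w\in Y$ if and only if $t\mapsto f(x+tw)$ is affine on $\R$, with slope $f^{\infty}(w)$, for every $x\in\R^n$. Combining subadditivity of $f^{\infty}$ with the general inequality $f^{\infty}(w)+f^{\infty}(-w)\geq0$ gives closure under addition, and positive homogeneity together with $-Y=Y$ gives closure under scalar multiplication, so $Y$ is a linear subspace of $\R^n$. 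Set $X:=Y^{\perp}$ and let $P,Q$ denote the orthogonal projections of $\R^n$ onto $X$ and $Y$.

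On $Y$, sublinearity together with $f^{\infty}(-w)=-f^{\infty}(w)$ forces $f^{\infty}|_Y$ to be linear (both inequalities $f^{\infty}(w_1+w_2)\leq f^{\infty}(w_1)+f^{\infty}(w_2)$ and its analogue for $-(w_1+w_2)$ must be equalities), so there is a unique vector $v\in Y=X^{\perp}$ with $f^{\infty}(w)=\langle v,w\rangle$ for every $w\in Y$. Define $c:X\to\R$ by $c:=f|_X$. For any $x\in\R^n$, since $Q(x)\in Y$ is a direction of linearity of $f$, the map $t\mapsto f(P(x)+tQ(x))$ is affine on $\R$ with slope $f^{\infty}(Q(x))=\langle v,Q(x)\rangle$; evaluating at $t=0$ and $t=1$ and using $\langle v,P(x)\rangle=0$ yields the decomposition $f(x)=c(P(x))+\langle v,x\rangle$.

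The main step, which I expect to be the technical heart, is verifying that $c$ is essentially coercive. First I would show $c$ has no nonzero direction of linearity in $X$: if some $u\in X\setminus\{0\}$ made $t\mapsto c(z+tu)$ affine for all $z\in X$, then since $\langle v,u\rangle=0$ the decomposition would force $t\mapsto f(y+tu)$ to be affine for every $y\in\R^n$, placing $u\in X\cap Y=\{0\}$, a contradiction. Hence $c^{\infty}(u)+c^{\infty}(-u)>0$ on $X\setminus\{0\}$. Since $c^{\infty}$ is the support function of the compact convex set $K:=\{p\in X:\langle p,u\rangle\leq c^{\infty}(u)\text{ for all }u\in X\}$, and the quantity $c^{\infty}(u)+c^{\infty}(-u)$ is precisely the width of $K$ in direction $u$, $K$ has positive width in every direction and therefore nonempty interior in $X$. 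Pick $w^*\in\operatorname{int}(K)$ and $r>0$ with $B(w^*,r)\subset K$; then $c^{\infty}(u)-\langle w^*,u\rangle\geq r|u|$ on $X$. Since $\phi_t(u):=(c(tu)-c(0))/t-\langle w^*,u\rangle$ is nondecreasing in $t>0$ with continuous pointwise limit $c^{\infty}(u)-\langle w^*,u\rangle\geq r$ on the unit sphere of $X$, Dini's theorem yields uniform convergence; consequently there exists $T_0$ with $c(x)-\langle w^*,x\rangle\geq c(0)+(r/2)|x|$ whenever $|x|\geq T_0$, proving essential coercivity with $\ell(x)=\langle w^*,x\rangle$.

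Finally, for uniqueness, if $f(x)=c_i(P_i(x))+\langle v_i,x\rangle$ for $i=1,2$ are two such decompositions, the form of each decomposition shows that $X_i^{\perp}$ is contained in the set of directions of linearity of $f$, while essential coercivity of $c_i$ (via the argument just given, in reverse) prevents any $u\in X_i\setminus\{0\}$ from being such a direction. Hence $X_i^{\perp}=Y$, so $X_1=X_2=X$. Both $v_i\in Y$ then satisfy $\langle v_i,w\rangle=f^{\infty}(w)$ for $w\in Y$, giving $v_1=v_2$, and restricting the decomposition to $X$ forces $c_1=c_2=f|_X$.
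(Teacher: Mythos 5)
The paper does not prove this statement: it is imported verbatim from \cite[Theorem 1.11]{AzagraMudarra2} and \cite[Lemma 4.2]{Azagra2013}, so there is no internal argument to compare yours against. Your self-contained proof via the recession function $f^{\infty}$ is sound and close in spirit to the cited sources, which identify $X^{\perp}$ with the lineality space of $f$ (there described through spans of differences of (sub)gradients rather than through $f^{\infty}$). The identification of $Y$ as a subspace, the linearity of $f^{\infty}|_{Y}$, the derivation of the decomposition, and the uniqueness argument are all correct.

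Two small inaccuracies, neither fatal. First, $K=\{p\in X:\langle p,u\rangle\leq c^{\infty}(u)\ \text{for all } u\in X\}$ need \emph{not} be compact: for $c(x)=x^{4}$ on $X=\R$ one has $c^{\infty}(\pm 1)=+\infty$ and $K=\R$. Luckily you never actually use compactness: $K$ is a nonempty closed convex set (it is $\overline{\operatorname{dom}(c^{*})}$, nonempty because $c$ is finite, hence $c^{*}$ is proper), positive width in every direction already forces nonempty interior, and the bound $c^{\infty}(u)\geq\langle w^{*},u\rangle+r|u|$ only needs $B(w^{*},r)\subset K$. Second, Dini's theorem as usually stated requires a finite continuous limit, whereas $c^{\infty}(u)-\langle w^{*},u\rangle$ may take the value $+\infty$ and in general is only lower semicontinuous on the unit sphere of $X$. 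This is repaired either by applying Dini to the truncations $\min\{\phi_{t},r\}$, which increase to the constant $r$, or by running the underlying compactness argument directly: for each unit $u$ choose $t_{u}$ with $\phi_{t_{u}}(u)>r/2$, use continuity of $\phi_{t_{u}}$ and monotonicity in $t$ to get the same bound on a neighborhood of $u$ for all $t\geq t_{u}$, and extract a finite subcover. With these cosmetic adjustments the proof is complete.
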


The next extension lemma is well known, but for the sake of completeness we will provide a proof.
\begin{lem}
\label{L2}
Suppose that $f:W\to\R$ is convex and $K$-Lipschitz, where $W\subset\R^n$ is convex. Then,
\begin{equation}
\label{eq4}
\tilde{f}(x)=\inf_{z\in W} \{f(z)+K|x-z|\}, 
\quad
x\in\R^n
\end{equation}
is convex and $K$-Lipschitz on $\R^n$, and $\tilde{f}=f$ on $W$.
\end{lem}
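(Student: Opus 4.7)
The plan is to verify three assertions in sequence: that $\tilde f=f$ on $W$, that $\tilde f$ is $K$-Lipschitz on all of $\R^n$ (which, as a byproduct, will show $\tilde f>-\infty$ everywhere and hence the infimum is well-defined), and finally that $\tilde f$ is convex.

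For the first, I would fix $x\in W$ and note that plugging $z=x$ into the defining infimum yields $\tilde f(x)\le f(x)$. For the reverse inequality, the hypothesis that $f$ is $K$-Lipschitz on $W$ gives $f(x)\le f(z)+K|x-z|$ for every $z\in W$, and taking the infimum over $z$ produces $f(x)\le\tilde f(x)$.

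For the Lipschitz bound, I would observe that for any $x,y\in\R^n$ and any $z\in W$ the triangle inequality gives $f(z)+K|x-z|\le f(z)+K|y-z|+K|x-y|$; taking the infimum over $z\in W$ yields $\tilde f(x)\le\tilde f(y)+K|x-y|$, and swapping $x$ and $y$ gives the two-sided estimate. Along the way, fixing any $z_0\in W$ and using the Lipschitz bound $f(z)\ge f(z_0)-K|z-z_0|$ together with $|x-z|\ge|x-z_0|-|z-z_0|$... actually more cleanly, one more application of the reverse triangle inequality gives $f(z)+K|x-z|\ge f(z_0)-K|x-z_0|$, so $\tilde f(x)>-\infty$ and the definition makes sense.

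The convexity is the step I would treat most carefully, though it is not genuinely hard. The cleanest approach is to note that the joint function $\Phi(x,z):=f(z)+K|x-z|$ is convex on $\R^n\times W$, because $z\mapsto f(z)$ is convex on $W$ and $(x,z)\mapsto|x-z|$ is a norm of an affine function of $(x,z)$ and thus jointly convex. Then $\tilde f$ is the marginal $\tilde f(x)=\inf_{z\in W}\Phi(x,z)$ of a jointly convex function over a convex set of parameters, which is standard: given $x_1,x_2\in\R^n$, $t\in[0,1]$ and $\eta>0$, pick $z_i\in W$ with $\Phi(x_i,z_i)<\tilde f(x_i)+\eta$; then $tz_1+(1-t)z_2\in W$ by convexity of $W$, and joint convexity of $\Phi$ gives
$$
\tilde f(tx_1+(1-t)x_2)\le\Phi(tx_1+(1-t)x_2,\,tz_1+(1-t)z_2)\le t\tilde f(x_1)+(1-t)\tilde f(x_2)+\eta,
$$
after which letting $\eta\downarrow0$ finishes the argument. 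No single step poses a real obstacle; the only point requiring any care is the joint convexity of $\Phi$, which relies on $W$ being convex so that convex combinations of admissible $z_i$ remain admissible.
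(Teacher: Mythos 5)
Your proposal is correct and follows essentially the same route as the paper: the identity $\tilde f=f$ on $W$ and the $K$-Lipschitz bound are argued exactly as in the paper, and your convexity step (marginal of the jointly convex $\Phi(x,z)=f(z)+K|x-z|$ over the convex parameter set $W$, via $\eta$-near-optimal $z_i$) is just a slightly repackaged version of the paper's direct computation with arbitrary $z,w\in W$ followed by taking the infimum. The only addition is your explicit check that $\tilde f>-\infty$, which the paper leaves implicit.
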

\begin{proof}
$K$-Lipschitz continuity of $f$ on $W$ implies that for $x,z\in W$ we have
$$
f(x)\leq f(z)+K|x-z|
\quad
\text{so}
\quad
f(x)\leq\tilde{f}(x).
$$
On the other hand, $\tilde{f}(x)\leq f(x)+K|x-x|=f(x)$ so $f=\tilde{f}$ on $W$. To prove that $\tilde{f}$ is $K$-Lipschitz, let $x,y\in\R^n$ and assume that $\tilde{f}(x)\geq \tilde{f}(y)$. For $z\in W$ we have
$$
\tilde{f}(x)\le f(z)+K|x-z|\leq
(f(z)+K|y-z|)+K|x-y|,
$$
so taking infimum on the right hand side over $z\in W$ yields
$K$-Lipschitz continuity of $\tilde{f}$.
It remains to show that $\tilde{f}$ is convex. If $x,y\in\R^n$ and $\lambda\in [0,1]$, then for any $z,w\in W$ we have
\begin{equation*}
\begin{split}
\tilde{f}(\lambda x+(1-\lambda)y)
&\leq 
f(\lambda z+(1-\lambda) w)+
K\big|\big(\lambda x+(1-\lambda)y\big)-\big(\lambda z+(1-\lambda) w\big)\big|\\
&\leq 
\lambda(f(z)+K|x-z|) + (1-\lambda)(f(w)+K|y-w|)
\end{split}
\end{equation*}
and taking infimum over $z,w\in W$ yields convexity of $\tilde{f}$.
\end{proof}

\section{Proofs of the main results}

\subsection{Proof of Theorem \ref{main theorem for C11 convex}} 
Let $E\subset A$ be a compact set such that $\mathcal{L}^n(A\setminus E)<\varepsilon/2$ and that all points $x\in E$ satisfy \eqref{eq2}. It is easy to see that
$$
 E=\bigcup_{j=1}^\infty E_j,
 \qquad
 E_1\subset E_2\subset\ldots,
$$
where
$$
E_{j}=\Big\{y\in E: \, f(x)-f(y)-\langle\nabla f(y), x-y\rangle \leq j |x-y|^2 \textrm{ for all } x\in\R^n \textrm{ s.t. } |x-y|\leq \frac{1}{j}\Big\}.
$$
Since by Theorem~\ref{T1}, $\nabla f$ is continuous on $E$, it is easy to check that the sets $E_j$ are closed and hence compact. 
Since $\mathcal{L}^n(E)<\infty$, $\mathcal{L}^n(E\setminus E_N)<\varepsilon/2$ for some $N$, so
$\mathcal{L}^n(A\setminus E_N)<\varepsilon$. Thus
\begin{equation}
\label{eq3}
\forall y\in E_N\ \forall x\in\R^n \ \left(|x-y|\leq N^{-1}\Rightarrow f(x)-f(y)-\langle\nabla f(y), x-y\rangle \leq N |x-y|^2\right).
\end{equation}
Since $E_N$ is compact, $E_N\subset \overline{B}(0,r)$ for some $r>0$. Let $R>r+N^{-1}$ and let $W=\overline{B}(0,R)$.
Since convex functions are locally Lipschitz, $f|_W$ is $K$-Lipschitz for some $K>0$ and we may assume that $K\geq 1$.

Let $\tilde{f}$ be defined by \eqref{eq4}. Then $\tilde{f}$ is convex and $K$-Lipschitz on $\R^n$. Since $\tilde{f}=f$ in $W$, the function $\tilde{f}$ satisfies \eqref{eq3}. Observe that
\begin{equation}
\label{eq5}    
\tilde{f}(x)-\tilde{f}(y)-\langle\nabla \tilde{f}(y), x-y\rangle \leq 2KN |x-y|^2
\quad
\text{for all $y\in E_N$ and all $x\in\R^n$.}
\end{equation}
Indeed, if $|x-y|\leq N^{-1}$, \eqref{eq5} follows from \eqref{eq3}. If $|x-y|>N^{-1}$, then
$$
\tilde{f}(x)-\tilde{f}(y)-\langle\nabla \tilde{f}(y), x-y\rangle \leq
K|x-y|+|\nabla\tilde{f}(y)|\, |x-y|\leq 2K|x-y|<2KN|x-y|^2.
$$
Also convexity of $\tilde{f}$ yields
\begin{equation}
\label{eq6} 
\tilde{f}(z)+\langle\nabla\tilde{f}(z),x-z\rangle\leq \tilde{f}(x)
\quad
\text{for all $z\in E_N$ and all $x\in\R^n$.}
\end{equation}
By combining \eqref{eq6} and \eqref{eq5} we get
$$
\tilde{f}(z)+\langle\nabla \tilde{f}(z),x-z\rangle \leq
\tilde{f}(y)+\langle\nabla \tilde{f}(y),x-y\rangle +2KN|x-y|^2
\quad
\text{for $y,z\in E_N$ and $x\in \R^n$.}
$$
This is condition (b) from Lemma~\ref{L1}. Since the condition is equivalent to \eqref{CW11}, Theorem~\ref{C11 Whitney thm for convex} gives that there is a $C^{1,1}$ function $F$ defined in $\R^n$ such that $F=\tilde{f}=f$ and $\nabla F=\nabla\tilde{f}=\nabla f$ in $E_N$. This and the fact that $\mathcal{L}^n(A\setminus E_N)<\varepsilon$ complete the proof.
\qed

\subsection{Proof of Corollary \ref{C1}}
According to Theorem~\ref{main theorem for C11 convex}, for each positive integer $i$, there is $g_i\in C^{1,1}(\R^n)$ such that $\mathcal{L}^n(\{x\in B(0,i):\, f(x)\neq g_i(x)\})<\varepsilon/2^{i+1}$. Let $\{\varphi_i\}_{i=1}^\infty$
be a smooth partition of unity subordinate to the covering $\{B(0,i)\}_{i=1}^\infty$ of $\R^n$. Then 
$g=\sum_{i=1}^\infty g_i\varphi_i\in C^{1,1}_{\rm loc}$ satisfies $\mathcal{L}^n(\{x\in\R^n:\, g(x)\neq f(x)\}<\varepsilon/2$ and the result follows from Whitney's theorem \cite[Theorem~4]{Whitney2} according to which a $C^{1,1}_{\rm loc}$ function on $\R^n$ coincides with a $C^2$ function outside a set of measure less than $\varepsilon/2$.
\qed

\subsection{Proof of Corollary \ref{main corollary for C11 convex}}

Take a compact convex body $W_{\varepsilon}$ such that
\begin{equation}
\label{K and Omega}
W_{\varepsilon}\subset\Omega,  \, \textrm{ and } \mathcal{L}^{n}\left(\Omega\setminus W_{\varepsilon}\right)<\frac{\varepsilon}{2}.
\end{equation}
Denote the Lipschitz constant of $f|_{W_\varepsilon}$ by $K$ (notice that $W_{\varepsilon}$ is at positive distance from the boundary of $\Omega$, so this Lipschitz constant exists).
According to Lemma~\ref{L2},
$$
\tilde{f}(x)=\inf_{z\in W_{\varepsilon}}\{f(z)+K|x-z|\}
$$
is convex on $\R^n$ and $\tilde{f}=f$ on $W_\varepsilon$.
Therefore, we can apply Theorem~\ref{main theorem for C11 convex} to $\tilde{f}$ and find a function $g\in C^{1,1}_{\textrm{conv}}(\R^n)$ such that 
$$
\mathcal{L}^{n}\big(\{x\in W_\varepsilon :\, g(x)\neq f(x)\}\big)=
\mathcal{L}^{n}\big(\{x\in W_\varepsilon :\, g(x)\neq \tilde{f}(x)\}\big)<\frac{\varepsilon}{2}.
$$
This and \eqref{K and Omega} imply that
$\mathcal{L}^{n}\left(\{x\in\Omega : g(x)\neq f(x)\}\right)<\varepsilon$. \qed

\subsection{Proof of Corollary \ref{corollary for convex bodies}}

We can assume that $0\in \textrm{int}(W)$ and consider the Minkowski functional of $W$, defined by
$$
\mu(x)=\inf\{\lambda\geq 0 \, : \, x\in \lambda W\},
$$
which is a Lipschitz convex function on $\R^n$. Let $L$ be the Lipschitz constant of $\mu$. By using Corollary~\ref{main corollary for C11 convex} we may find a function $g\in C^{1,1}_{\textrm{conv}}(\R^n)$ such that
$$
\mathcal{L}^{n}\left( \{x\in 2 W \, : \, \mu(x)\neq g(x)\}\right)<\frac{\varepsilon}{L}.
$$
Now consider the annulus 
$$
C_{1,2}:=2W\setminus W=\{x\in\R^n : 1< \mu(x)\leq 2\},
$$
and define 
$$
A=\{x\in C_{1,2} : \mu(x)\neq g(x)\}.
$$ 
By the coarea formula for Lipschitz functions (see \cite[Theorem 3.10]{EvansGariepy} for instance) we have
$$
\varepsilon> L\, \mathcal{L}^{n}(A)\geq\int_{A}|\nabla \mu(x)|\, dx=\int_{1}^{2}\mathcal{H}^{n-1}\left(A\cap \mu^{-1}(t)\right)\, dt.
$$
This inequality and Sard's theorem imply that there exists  a regular value $t_{0}\in (1, 2)$ of $g\in C^{1,1}$ such that
$$
\mathcal{H}^{n-1}\left(A\cap \mu^{-1}(t_0)\right)< \varepsilon.
$$
Then we can define 
$$
W_{\varepsilon}=\frac{1}{t_0}g^{-1}(-\infty, t_0],
$$
so that $W_{\varepsilon}$ is a convex body of class $C^{1,1}$, with boundary
$$
\partial W_{\varepsilon}=\frac{1}{t_0}g^{-1}(t_0)
\quad
\text{and hence}
\quad
t_0(\partial W\setminus \partial W_\varepsilon)=A\cap \mu^{-1}(t_0).
$$
This yields
$$
\mathcal{H}^{n-1}(\partial W\setminus\partial W_\varepsilon)\leq 
t_{0}^{n-1} \mathcal{H}^{n-1}\left(\partial W\setminus \partial W_{\varepsilon}\right)=
\mathcal{H}^{n-1}\left(A\cap \mu^{-1}(t_0)\right)<\varepsilon.
$$
\qed

\subsection{Proof of Theorem \ref{main theorem for loc C11 convex}.}

The necessity of the essential coercitivity assumption is clear from Proposition~\ref{if f is not coercive nothing can be done globally} and Theorem~\ref{rigid global behaviour of convex functions}. 
The fact that this assumption is sufficient will follow from Theorem~\ref{Corollary from Azagra2019}.
The rest of the proof is similar to that of Theorem~\ref{main theorem for C11 convex} but more complicated.

Thus assume that $f:\R^n\to \R$ is convex, $f\not\in C^{1,1}_{\rm loc}$ and $f$ is essentially coercive. Fix $0<\varepsilon<1$. It remains to show that there is a convex $g\in C^{1,1}_{\rm loc}$ such that $\mathcal{L}^n(\{f\neq g\})<\varepsilon$.

By Aleksandrov's theorem and by Theorem~\ref{T1}\footnote{We could use Lusin's theorem instead of Theorem~\ref{T1}.}, there is a closed set $A\subset\R^n$ such that
$$
\mathcal{L}^n(\R^n\setminus A)<\frac{\varepsilon}{2},
\quad
\text{$\nabla f|_A$ is continuous,}
\quad
\text{and}
\quad
\text{\eqref{eq2} is true for all $x\in A$.}
$$
As in the proof of Theorem~\ref{main theorem for C11 convex}, 
$$
 A=\bigcup_{j=1}^\infty E_j,
 \qquad
 E_1\subset E_2\subset\ldots,
$$
where
$$
E_{j}=\Big\{y\in A: \, f(x)-f(y)-\langle\nabla f(y), x-y\rangle \leq j |x-y|^2 \textrm{ for all } x\in\R^n \textrm{ s.t. } |x-y|\leq \frac{1}{j}\Big\}.
$$
Since $\nabla f$ is continuous on $A$, it easily follows that the sets $E_j$ are closed and hence measurable.

We set $B_0=\emptyset$, and for each $k\in\N$, we define
$$
B_k:=B(0, k), \, \textrm{ and } \, A_k:=A\cap(B_k\setminus B_{k-1}).
$$
Now, for each $k\in\N$, since the sequence $\{E_j\}_{j\in\N}$ is increasing and $A_k=\bigcup_{j=1}^{\infty} \left(E_j\cap A_k\right)$, we can find $j_{k}\in\N$ such that
$$
\mathcal{L}^{n}(A_k\setminus E_{j_k})<\frac{\varepsilon}{2^{k+1}},
$$
and define, for each $k\in\N$,
$$
C_k:=E_{j_k}\cap A_k, 
$$
and
$$
C:=\bigcup_{k=1}^{\infty}C_k.
$$
We may obviously assume that 
\begin{equation}
j_{k}\leq j_{k+1} \textrm{ for all } k\in\N.
\end{equation}
We then have that 
\begin{equation}
\label{estimate for the measure of A minus C}
\mathcal{L}^{n}(A\setminus C)=\sum_{k=1}^{\infty}\mathcal{L}^{n}(A_k\setminus C_k)<\frac{\varepsilon}{2}
\quad
\text{so}
\quad
\mathcal{L}^n(\R^n\setminus C)<\varepsilon.
\end{equation}

\begin{lem}\label{properties of Ej again}
For each $k\in\N$ there exists a number $\beta_{k}\geq 1$ such that
\begin{equation}\label{estimates for applying locally C11 convex WET}
f(x)-f(y)-\langle\nabla f(y), x-y\rangle\leq \beta_k |x-y|^2 \textrm{ for all } y\in C\cap B_k \textrm{ and all } x\in B_{4k}.
\end{equation}
\end{lem}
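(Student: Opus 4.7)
The plan is to reduce the lemma to two easy cases by exploiting the quadratic bound built into the definition of $E_{j_k}$ together with the local Lipschitz behavior of the convex function $f$.

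First I would observe a simple monotonicity fact about the set $C$: if $y\in C\cap B_k$, then $y\in C_m$ for some $m$, and the annulus condition $y\in A_m\subset B_m\setminus B_{m-1}$ together with $|y|<k$ forces $m\le k$. Since the indices $j_m$ are nondecreasing and the sets $E_j$ are nested (an immediate consequence of their definition), we get $y\in E_{j_m}\subset E_{j_k}$. So every $y\in C\cap B_k$ satisfies the uniform quadratic bound
$$
f(x)-f(y)-\langle\nabla f(y),x-y\rangle\le j_k|x-y|^2
\quad\text{whenever }|x-y|\le 1/j_k.
$$

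Next, I would dispose of the "near" case: if $y\in C\cap B_k$ and $x\in B_{4k}$ satisfy $|x-y|\le 1/j_k$, the displayed inequality above is exactly what we need, with constant $j_k$.

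For the "far" case, where $|x-y|> 1/j_k$, I would use that $f$ is convex and hence Lipschitz on any bounded set. Since $y\in B_k$ and $x\in B_{4k}$, both points lie in $B_{5k}$, where $f$ is $L_k$-Lipschitz for some $L_k>0$ depending on $k$; in particular $|\nabla f(y)|\le L_k$ at every differentiability point $y\in B_k$. Hence
$$
f(x)-f(y)-\langle\nabla f(y),x-y\rangle\le 2L_k|x-y|.
$$
Because $|x-y|> 1/j_k$ implies $|x-y|<j_k|x-y|^2$, the right-hand side is at most $2L_kj_k|x-y|^2$. Taking
$$
\beta_k:=\max\{1,\, j_k,\, 2L_kj_k\}
$$
covers both cases and yields \eqref{estimates for applying locally C11 convex WET}.

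I do not expect any real obstacle here. The only slightly delicate point is the first step, namely noticing that the monotonicity $j_m\le j_k$ for $m\le k$ combined with the nesting $E_j\subset E_{j+1}$ lets one pass from "$y$ lies in some $C_m$" to "$y$ lies in $E_{j_k}$"; once this is observed, the rest is a routine split between small and large $|x-y|$ using only local Lipschitz continuity of $f$ on $B_{5k}$.
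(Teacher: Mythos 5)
Your proof is correct and follows essentially the same route as the paper's: both arguments first observe that $C\cap B_k\subset E_{j_k}$ (you spell out the monotonicity $j_m\le j_k$ and the nesting $E_{j_m}\subset E_{j_k}$ more explicitly than the paper does), then split into the near case $|x-y|\le 1/j_k$ handled by the definition of $E_{j_k}$ and the far case $|x-y|>1/j_k$ handled by local Lipschitz continuity, with the only cosmetic difference being your use of $B_{5k}$ where the paper uses $B_{4k}$ (both contain $x$ and $y$, so either works).
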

\begin{proof}
Take $y\in C\cap B_k$
and note that since $(j_k)$ is increasing we have $C\cap B_k\subset E_{j_k}\cap B_k\subset B_{4k}$. 
In particular $y\in E_{j_k}$.

If $x\in\R^n$ is such that $|x-y|\leq 1/j_k$, the inequality we seek obviously holds with $\beta_k= j_k$, because of the definition of $E_{j_k}$. On the other hand, if $|x-y|>1/j_k$ and $x\in B_{4k}$, then, since $f$ is Lipschitz on the ball $B_{4k}$, we have
$$
f(x)-f(y)-\langle\nabla f(y), x-y\rangle\leq 
2\,\textrm{Lip}\left(f_{|_{B_{4k}}}\right)|x-y|\leq 2\,\textrm{Lip}\left(f_{|_{B_{4k}}}\right) j_k |x-y|^2.
$$
In any case the Lemma is satisfied with $\beta_k=\max\left\{j_k, \, 2j_k\,\textrm{Lip}\left(f_{|_{B_{4k}}}\right)\right\}$. 
\end{proof}

Since $f$ is convex we have, for all $z\in C$, $x\in\R^n$, that
\begin{equation}
f(z)+\langle \nabla f(z), x-z\rangle\leq f(x),
\end{equation}
which combined with the preceding lemma gives us
$$
f(z)+\langle \nabla f(z), x-z\rangle \leq
f(y)+\langle\nabla f(y), x-y\rangle+\beta_k |x-y|^2 
$$
for all $z\in C$, $y\in C\cap B_k$, $x\in B_{4k}$.
That is to say, the jet $(f(y), \nabla f(y))$, $y\in C$, satisfies condition \eqref{every tangent function lies above all tangent planes corollary 2} of Theorem~\ref{Corollary from Azagra2019} with $A_k=2\beta_k\geq 2$.

Finally, let us check condition \eqref{essentially coercive data corollary 2} which in our case reads as
$$
\operatorname{span}\{\nabla f(x)-\nabla f(y):\, x,y\in C\}=\R^n.
$$
Fix some $y_0\in C$, and consider the function $g(x)=f(x)-f(y_0)-\langle\nabla f(y_0), x-y_0\rangle$. Since $f$ is essentially coercive and convex, $g$ is also essentially coercive and convex. 
We have that 
$$
\operatorname{span}\{\nabla f(x)-\nabla f(y) : x, y\in C\}=
\operatorname{span}\{\nabla f(x)-\nabla f(y_0) : x\in C\}=\operatorname{span}\{\nabla g(x) : x\in C\}.
$$ 
Thus it suffices to show that $Y:=\textrm{span}\{\nabla g(x) : x\in C\}=\R^n$.
Seeking a contradiction, suppose that $Y\neq\R^n$. We can then take a vector $0\neq v\in Y^{\perp}$ such that
\begin{equation}
\label{if the derivatives of g do not span}
\langle \nabla g(x), v\rangle =0 \textrm{ for all } x\in C.
\end{equation}
Since $\mathcal{L}^{n}(\R^n\setminus C)<\varepsilon$, an easy application of Fubini's theorem shows that there exists $x_0$ perpendicular to $v$ such that the intersection of the line $L:=\{x_0+tv :t\in\R\}$ with the set $\R^n\setminus C$ has finite one-dimensional measure. This implies that $L\cap C$ must contain sequences
$x_j^{\pm}:=x_0+t_{j}^{\pm} v, j\in\N$
with $\lim_{j\to\pm\infty}t_{j}^{\pm}=\pm\infty$.

Consider the restriction of $g$ to the line $L$ i.e., consider the convex function $h(t)=g(x_0+tv)$. Since by \eqref{if the derivatives of g do not span},
$$
h'(t_j^\pm)=\langle\nabla g(x_j^\pm),v\rangle =0,
$$
it follows that $h$ is constant and hence $g$ is constant on the line $L$.
But this contradicts the fact that $g$ is essentially coercive.

We have thus checked that the $1$-jet $(f(y), \nabla f(y))$, $y\in C$, satisfies all the conditions of Theorem \ref{Corollary from Azagra2019}, and therefore there exists a locally $C^{1,1}$ convex function $F:\R^n\to\R$ such that $F=f$ on $C$, and also $\nabla F=\nabla f$ on $C$. In particular we have that
$$
\mathcal{L}^n\left(\left\{ x\in \R^n : \, f(x)\neq F(x)\right\}\right) \leq\mathcal{L}^{n}\left(\R^n\setminus C\right) < \varepsilon.
$$
The proof of Theorem \ref{main theorem for loc C11 convex} is complete.
\qed

\subsection{Proof of Corollary \ref{corollary for convex hypersurfaces}}
We will need to use the following:
\begin{lem}
Let $W$ be a closed convex set such that $0\in\textrm{int}(W)$, and $\mu=\mu_{W}$ denote the Minkowski functional of $W$. The following assertions are equivalent:
\begin{enumerate}
\item[{(a)}] $W$ does not contain any line. 
\item[{(b)}] $\partial W$ does not contain any line.
\item[{(c)}] $\mu^{-1}(0)$ does not contain any line
\item[{(d)}] $\mu$ is essentially coercive.
\end{enumerate}
\end{lem}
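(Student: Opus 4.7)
The plan is to prove $(a)\Leftrightarrow(c)\Leftrightarrow(d)$ together with $(a)\Leftrightarrow(b)$, exploiting that under $0\in\mathrm{int}(W)$ the Minkowski functional $\mu$ is a finite, nonnegative, positively homogeneous convex function, and that in the setting of the corollary $\partial W\neq\emptyset$, so $W\neq\R^n$.

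First I would establish $(a)\Leftrightarrow(c)$ by identifying $\mu^{-1}(0)$ with the recession cone of $W$: from the definition, $\mu(x)=0$ iff $tx\in W$ for every $t\geq 0$. Since $0\in W$, one has $v\in\mu^{-1}(0)\cap(-\mu^{-1}(0))$ iff $\R v\subset W$, i.e., iff $v$ lies in the lineality space of $W$; hence $\mu^{-1}(0)$ contains a line iff $W$ does. Next, for $(a)\Leftrightarrow(b)$, the direction $(b)\Rightarrow(a)$ is trivial; conversely, if $\R v\subset W$ then by the standard lineality-space property $W+\R v=W$. Picking any $x_0\in\partial W$ (nonempty since $W\neq\R^n$), if $x_0+t_0v$ were in $\mathrm{int}(W)$ for some $t_0$, then translating a small ball back by $-t_0v$ would place $x_0$ in $\mathrm{int}(W)$, a contradiction; hence $x_0+\R v\subset\partial W$.

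For $(c)\Leftrightarrow(d)$, the direction $\neg(c)\Rightarrow\neg(d)$ is immediate: if $\R u\subset\mu^{-1}(0)$ with $u\neq 0$, then for any linear $\ell$ the function $t\mapsto\mu(tu)-\ell(tu)=-t\ell(u)$ is affine and cannot tend to $+\infty$ as $|t|\to\infty$. For $\neg(d)\Rightarrow\neg(c)$, which is the main substantive step, I would apply Theorem~\ref{rigid global behaviour of convex functions} to write $\mu(x)=c(Px)+\langle w,x\rangle$ with $c:X\to\R$ essentially coercive, $P$ the orthogonal projection onto $X$, and $w\in X^\perp$. Non-essential-coercivity of $\mu$ forces $X\neq\R^n$ (otherwise $X^\perp=\{0\}$ gives $w=0$ and $\mu=c$ would be essentially coercive). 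Picking $u\in X^\perp\setminus\{0\}$ gives $\mu(tu)=c(0)+t\langle w,u\rangle$; evaluation at $t=0$ yields $c(0)=0$ (since $\mu(0)=0$), while the nonnegativity of $t\mapsto t\langle w,u\rangle$ for all $t\in\R$ forces $\langle w,u\rangle=0$. Thus $\mu$ vanishes on the line $\R u$, giving $\neg(c)$. The main obstacle is this last step, which requires combining the rigid structural decomposition with the extra positive-homogeneity and nonnegativity of $\mu$ to kill the ambient linear term $\langle w,\cdot\rangle$ and extract a genuine line of zeros inside $X^\perp$.
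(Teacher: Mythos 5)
Your proof is correct and, for the substantive implication (non-coercivity of $\mu$ implies $\mu^{-1}(0)$ contains a line), uses exactly the paper's key tool, namely Theorem~\ref{rigid global behaviour of convex functions} combined with $\mu\geq 0$ and $\mu(0)=0$. The organization differs: you establish the three biconditionals $(a)\Leftrightarrow(c)$, $(a)\Leftrightarrow(b)$, $(c)\Leftrightarrow(d)$, while the paper runs a four-step cycle $(a)\Rightarrow(b)\Rightarrow(c)\Rightarrow(d)\Rightarrow(a)$. To link $W$, $\partial W$, and $\mu^{-1}(0)$, you work with the recession cone/lineality space of $W$ and a ball-translation argument ($W+\R v=W$ forces $x_0+\R v\subset\partial W$), whereas the paper stays purely in terms of $\mu$, using sublinearity to show $\mu(y+tv)\leq\mu(y-x)$ is bounded above, hence constant $\equiv 1$ for $y\in\partial W$. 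Both are elementary; yours makes the convex-geometric structure of $W$ explicit, the paper's is slightly shorter. One small slip: you label $(b)\Rightarrow(a)$ as the trivial direction, but it is $(a)\Rightarrow(b)$ that is trivial (since $\partial W\subset W$); your ``conversely'' argument in fact proves $\neg(a)\Rightarrow\neg(b)$, i.e.\ $(b)\Rightarrow(a)$, so the nontrivial direction is actually the one you proved in full, and the content is complete despite the mislabel. You are also right to note the tacit hypothesis $W\neq\R^n$ (equivalently $\partial W\neq\emptyset$), which the paper's own step $(b)\Rightarrow(c)$ uses implicitly when it picks $y\in\mu^{-1}(1)=\partial W$.
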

\begin{proof}
$(a)\implies (b)$ is obvious.

$(b)\implies (c)$: if for some $x, v$ with $v\neq 0$ we have $\mu(x+tv)=0$ for all $t\in\R$ then, for any $y\in \mu^{-1}(1)=\partial W$ we have $\mu(y+tv)=\mu(y-x+x+tv)\leq\mu(y-x)+\mu(x+tv)=\mu(y-x)$ for all $t\in\R$. In particular the convex function $\R\ni t\mapsto \mu(y+tv)\in\R$ is bounded above, hence it is constant. That is to say, $\mu(y+tv)=\mu(y)=1$ for all $t\in\R$, which means that $\partial W$ contains the line $\{y+tv : t\in\R\}$.

$(c)\implies (d)$: If $\mu$ is not essentially coercive then by Theorem~\ref{rigid global behaviour of convex functions}, for some $w\neq 0$ we have that $t\mapsto \mu(tw)$ is linear, and this may only happen if $\mu(tw)=0$ for all $t\in\R$, because $\mu\geq 0$.

$(d)\implies (a)$. If $W$ contains a line $\{x+tv: t\in\R\}$, then $t\mapsto \mu(x+tv)$ convex and bounded from above by $1$ so is is constant. This prevents $\mu$ from being essentially coercive.
\end{proof}
Now we can prove Corollary \ref{corollary for convex hypersurfaces}.

$(1)\implies (2)$: Let $W$ be the closed convex set with nonempty interior such that $S=\partial W$. Without loss of generality we may assume that $0$ is an interior point of $W$.  Denote the Minkowski functional of $W$ by $\mu$. 
By the assumption $(1)$ and the preceding lemma, $\mu$ is essentially coercive. Also observe that $\mu$ is $L$-Lipschitz, where $1/L=d(0, S)$. Then the same proof as in Corollary \ref{corollary for convex bodies} (replacing $C^{1,1}$ with $C^{1,1}_{\textrm{loc}}$ at appropriate places, and using Theorem \ref{main theorem for loc C11 convex} instead of Corollary \ref{main corollary for C11 convex}) shows $(2)$.

$(2)\implies (1)$:
Suppose to the contrary that $S$ contains a line, or equivalently that $\mu^{-1}(0)$ contains a line $L_0=\{tv:\, t\in\R\}$, $v\neq 0$.

Given $\varepsilon>0$, let $S_\varepsilon$ be a convex hypersurface of class $C^{1,1}_{\rm loc}$ such that $\mathcal{H}^{n-1}(S\setminus S_\varepsilon)<\varepsilon$. Let $W_\varepsilon$ be the closed convex set such that $\partial W_\varepsilon=S_\varepsilon$. If $\varepsilon>0$ is small enough, we may assume that $0\in{\rm int}(W)$ and $0\in{\rm int}(W_\varepsilon)$.

Indeed, since $S\not\in C^{1,1}_{\rm loc}$, it is not a flat hyperplane and we can find points $p_1,\ldots,p_{n+1}\in S$ such that the simplex 
$\operatorname{conv}\{p_1,\ldots,p_{n+1}\}\subset W$ has nonempty interior. By translating the coordinate system we may assume that $0$ belongs to the interior of that simplex and hence $0\in\operatorname{int}(W)$.
Then, if $\varepsilon>0$ is sufficiently small, we can find points $p_1',\ldots,p_{n+1}'\in S_\varepsilon$ so close to the points $p_1,\ldots,p_{n+1}$ that $0$ belongs to the interior of the simplex 
$\operatorname{conv}\{p_1',\ldots,p_{n+1}'\}\subset W_\varepsilon$
and hence $0\in\operatorname{int}(W_\varepsilon)$.
Denote the Minkowski functional of $W_\varepsilon$ by $\mu_\varepsilon$.

Since $\mu$ is convex, if $\nabla\mu(x)=0$, then $\mu$ attains minimum at $x$. Since $\mu\geq 0=\mu(0)$, it follows that $x\in \mu^{-1}(0)$. Therefore, $|\nabla\mu|>0$ almost everywhere in the set $\R^n\setminus \mu^{-1}(0)$.

Recall that $L_0\subset\mu^{-1}(0)$ so $\mu(tv)=0$ for all $t\in\R$. This implies that $\mu$ is constant on every line parallel to $L_0$. Indeed,
$$
\mu(x+tv)\leq\mu(x)+\mu(tv)=\mu(x)
$$
so the convex function $t\mapsto (x+tv)$ is constant as bounded from above.

Let $X=L_0^\perp$ be the orthogonal complement of $L_0$ and let $P:\R^n\to X$ be the orthogonal projection. Since $\mu$ is constant on every line parallel to $L_0$, $\mu(x)=\mu(P(x))$, hence $\nabla\mu(x)=\nabla\mu(P(x))$, and also 
\begin{equation}
\label{cylindrical structure of mu-1(0)}
\R^n\setminus\mu^{-1}(0)=P^{-1}(X\setminus \mu^{-1}(0)).
\end{equation}
Recall that $|\nabla\mu(y)|>0$ exists and is positive for almost all $y\in X\setminus\mu^{-1}(0)$.

Suppose that $E\subset\R^n\setminus\mu^{-1}(0)$ is measurable and
$$
\int_E|\nabla\mu|<\infty.
$$
Since $|\nabla\mu|$ is well defined and constant along almost all lines $P^{-1}(y)$, $y\in X\setminus\mu^{-1}(0)$ parallel to $L_0$, we note that  $P(E)\subseteq X\setminus\mu^{-1}(0)$, and apply Fubini's theorem to obtain
$$
\int_{P(E)} |\nabla\mu(y)|\mathcal{L}^1(P^{-1}(y)\cap E)\, d\mathcal{L}^{n-1}(y)=
\int_E |\nabla\mu|\, d\mathcal{L}^n<\infty.
$$
Since $|\nabla\mu(y)|>0$ for almost all $y\in X\setminus\mu^{-1}(0)$, $\mathcal{L}^1(P^{-1}(y)\cap E)<\infty$ for almost all $y\in X\setminus\mu^{-1}(0)$ i.e., almost every line parallel to $L_0$ and disjoint from $\mu^{-1}(0)$
intersects $E$ along a set of finite length. We shall use this observation below.

Let $C_a:=\{ty:\, y\in S\cap S_\varepsilon,\ t\in [0,a]\}$, $a>0$. Then for $0<t\leq a$,
$$
\mathcal{H}^{n-1}(\mu^{-1}(t)\setminus C_a)=
\mathcal{H}^{n-1}(t(S\setminus S_\varepsilon))=
t^{n-1}\mathcal{H}^{n-1}(S\setminus S_\varepsilon)<\varepsilon t^{n-1},
$$
and hence the coarea formula yields
$$
\int_{\mu^{-1}((0,a])\setminus C_a} |\nabla\mu(x)|dx= \int_{0}^{a}\mathcal{H}^{n-1}(\mu^{-1}(t)\setminus C_a)dt<\int_{0}^{a}\varepsilon t^{n-1}\, dt=\frac{\varepsilon a^{n}}{n}<\infty.
$$
It follows from an observation that we made earlier, that almost every line parallel to $L_0$ intersects $\mu^{-1}((0,a])\setminus C_a$ along a set of finite length and hence almost every line parallel to $L_0$ that is contained in $\mu^{-1}(0,a]$ intersects $\R^n\setminus C_a$ along a set of finite length. Therefore, for such a line, the set $L\cap C_a$ contains sequences
$$
x_j^{\pm}:=x+t_{x,j}^{\pm} v\in C_a, 
\quad
\text{with}
\quad
\lim_{j\to\pm\infty}t_{x,j}^{\pm}=\pm\infty.
$$
By convexity of $\mu_\varepsilon$, the fact that $\mu=\mu_\varepsilon$ on $C_a$, and that $\mu$ is constant on $L$, it follows that $\mu(y)=\mu_{\varepsilon}(y)$ for all $y\in L$, and by continuity it follows that $\mu=\mu_\varepsilon$ on every line parallel to $L_0$ and contained in $\mu^{-1}(0,a]$.
Since $a>0$ is arbitrary, it follows that $\mu=\mu_{\varepsilon}$ on every line parallel to $L_0$  and disjoint from $\mu^{-1}(0)$ (because every line $L$ parallel to $L_0$  and disjoint from $\mu^{-1}(0)$ is contained in $\mu^{-1}(0,a]$ for some $a>0$; indeed, if $\mu(x)=a>0$ for some $x\in L$, then $\mu=a$ on $L$ and $L\subset\mu^{-1}(0,a]$). Bearing in mind \eqref{cylindrical structure of mu-1(0)}, we deduce  that $\mu=\mu_{\varepsilon}$ on $\R^n\setminus\mu^{-1}(0)$, and therefore $S=\mu^{-1}(1)=\mu_{\varepsilon}^{-1}(1)=S_{\varepsilon}$. But this is absurd, because $S_{\varepsilon}$ is of class $C^{1,1}_{\textrm{loc}}$ and $S$ is not.

\end{document}